\def\nfrac#1#2{{\textstyle\frac{#1}{#2}}}
\newtheorem{theorem}{Theorem}[section]
\newtheorem{lemma}[theorem]{Lemma}
\newtheorem{remark}[theorem]{Remark}
\newtheorem{corollary}[theorem]{Corollary}
\newtheorem{definition}[theorem]{Definition}
\def\nfrac#1#2{{\textstyle\frac{#1}{#2}}}
\def\dfrac#1#2{\lower0.15ex\hbox{\large$\frac{#1}{#2}$}}
\renewcommand{\p@enumii}{}
\renewcommand{\p@enumiii}{}
\title{Structure and eigenvalues of heat-bath Markov chains}
\author{
Martin Dyer\thanks{Research supported by EPSRC Grant EP/J006300/1
and by a UNSW Faculty of Science Visiting Researcher Fellowship.}\\
\small School of Computing\\[-0.5ex]
\small University of Leeds\\[-0.5ex]
\small Leeds LS2 9JT, UK\\[-0.5ex]
\small \tt m.e.dyer@leeds.ac.uk\\
\and
Catherine Greenhill\thanks{
Research supported by Australian Research Council.}\\
\small School of Mathematics and Statistics\\[-0.5ex]
\small The University of New South Wales\\[-0.5ex]
\small Sydney NSW 2052, Australia\\[-0.5ex]
\small \tt csg@unsw.edu.au\\
\and
Mario Ullrich\thanks{
Research partially supported by ERC Advanced Grant PTRELSS and
DFG grant DFG GRK 1523.}\\
\small Mathematisches Institut\\[-0.5ex]
\small Friedrich-Schiller-Universit\"at\\[-0.5ex]
\small 07743 Jena, Germany\\[-0.5ex]
\small \tt mario.ullrich@uni-jena.de
}
\date{10 April 2014\\[0.5ex]
\small Keywords: stochastic matrices, Markov chains, heat-bath, eigenvalues, positive semidefinite\\[0.5ex]
  MSC 2010: 15B51,60J10 }
\begin{document}

\maketitle

\begin{abstract}
We prove that heat-bath chains (which we define in a general setting)
have no negative eigenvalues.  
Two applications of this result are presented: one to single-site heat-bath
chains for spin
systems and one to a heat-bath Markov chain for sampling contingency tables.
Some implications of our main result
for the analysis of the mixing time of heat-bath
Markov chains are discussed.
We also prove an alternative characterisation of heat-bath chains,
and consider possible generalisations.
\end{abstract}

\section{Definitions and our first result}\label{s:result}

Suppose that $\Omega$ is a finite set 
and let $\pi:\Omega\to (0,1]$ be a probability distribution on $\Omega$.
Let $\mathcal{L}$ be a nonempty finite index set 
and let $L= |\mathcal{L}|$.  
Suppose that for all
$x\in\Omega$ and $a\in\mathcal{L}$ we have a subset 
$\Omega_{x,a}$ of $\Omega$ such that
\begin{itemize}
\item[(I)]
$x\in \Omega_{x,a}$ for all $x\in\Omega$ and $a\in\mathcal{L}$,
and 
\item[(II)] for each $a\in \mathcal{L}$, the set
$\{ \Omega_{x,a} : x\in\Omega\}$ forms a partition of $\Omega$.
\end{itemize}
For $a\in\mathcal{L}$, define the $|\Omega|\times |\Omega|$ matrix
$P_a$ 
(with rows and columns indexed by $\Omega$) by
\begin{equation}
\label{Pa} P_a(x,y) = \frac{\pi(y)}{\pi(\Omega_{x,a})}\, 
                    \mathds{1}(y\in\Omega_{x,a}).
\end{equation}
(Here $\mathds{1}(y\in\Omega_{x,a})$ equals 1 if $y\in\Omega_{x,a}$,
and equals 0 otherwise.)
Note that $P_a$ is well-defined for all $a\in\mathcal{L}$,
since $\pi$ is nonzero on all states and each set $\Omega_{x,a}$ is nonempty.

Now for a given probability distribution $\rho$ on $\mathcal{L}$,
let $P$ be the $|\Omega|\times|\Omega|$ matrix defined by
\begin{equation}
\label{general}
P =  \sum_{a\in \mathcal{L}} \rho(a)\,  P_a.  
\end{equation}

Since $P$ is a stochastic matrix, it defines a Markov chain
$\mathcal{M}$ on $\Omega$, determined uniquely by $\pi$, 
$\mathcal{L}$, $\rho$, and the sets $\Omega_{x,a}$.
A transition of $\mathcal{M}$ from current state $x\in\Omega$ is performed by
choosing an element $a\in\mathcal{L}$ according to the distribution
$\rho$, then sampling the next state $y$ from $\Omega_{x,a}$
with respect to the distribution $\pi$ restricted
to $\Omega_{x,a}$.   

\begin{definition}
\emph{
A Markov chain $\mathcal{M}$ on a finite state space $\Omega$
is a \emph{heat-bath chain} if its transition matrix $P$
satisfies (\ref{general})
with respect to 
some finite nonempty set $\mathcal{L}$ equipped with a probability
distribution $\rho$, some probability distribution $\pi:\Omega\to (0,1]$,  
and some sets $\Omega_{x,a}$ which satisfy (I), (II). 
Here the matrices $P_a$ in (\ref{general})
are defined by (\ref{Pa}).
} \hfill $\diamond$
\label{d:heat-bath}
\end{definition}

Note that conditions (I) and (II) imply that 
for all $x,y\in\Omega$ and $a\in\mathcal{L}$,
\begin{equation}
\label{condition}
\text{if \, $y\in\Omega_{x,a}$ \, then \, $\Omega_{x,a} = \Omega_{y,a}$}.
\end{equation}
Furthermore, when (\ref{general}) holds it follows that $\mathcal{M}$ is 
aperiodic
(since every state has a self-loop) and that $\mathcal{M}$ is
reversible with respect to $\pi$.  However, the chain $\mathcal{M}$
need not be irreducible.
(See~\cite{jerrum-book} for Markov chain definitions which are
not given here.)

Before proceeding, we indicate how our definition of heat-bath chains 
corresponds to the usual notion of heat-bath chains, in the
setting of graph colourings or the Potts model.  In such a chain, the 
state space is a subset of
$S^V$ for some finite sets $V$, $S$.  
To express this using our formulation, let $\mathcal{L}=\{a_1,\dots,a_L\}$ 
be the set of all those subsets $a_i\subset V$ which may be updated by a 
single transition of the chain, and, for $a\in\mathcal{L}$, 
let $\Omega_{x,a}=x_{V\setminus a}\times S^a$ be the set of all states which
can be obtained from $x\in\Omega$ by ``recolouring'' or reassigning the
values at elements of $a$. 
Here $x_W=(x_v)_{v\in W}$ denotes
the restriction of $x$ to $W$, for all $x\in S^V$ and $W\subseteq V$.
So $\Omega_{x,a}$ contains
all possibilities for the next state of the chain, given that $x$
is the current state and that $a\in\mathcal{L}$ was chosen by the
transition procedure.
See also the examples presented in Section~\ref{s:applications}.

\begin{lemma}
Suppose that $\mathcal{M}$ is a heat-bath chain, in the sense
of Definition~\ref{d:heat-bath}.
Then $\mathcal{M}$ has no negative eigenvalues.
\label{heat-bath}
\end{lemma}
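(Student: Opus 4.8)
The key observation is that each individual matrix $P_a$ is, up to a similarity transformation by the diagonal matrix $D = \mathrm{diag}(\sqrt{\pi(x)})$, an orthogonal projection. Indeed, condition (II) says that for fixed $a$ the sets $\Omega_{x,a}$ partition $\Omega$ into blocks, and on each block $B$ the matrix $P_a$ acts as the row-stochastic matrix whose $(x,y)$ entry is $\pi(y)/\pi(B)$. Thus $P_a$ is block-diagonal (after reordering) with blocks of rank one. Conjugating: let $S_a = D P_a D^{-1}$, so $S_a(x,y) = \sqrt{\pi(x)}\,\sqrt{\pi(y)}/\pi(B)$ for $x,y$ in a common block $B$. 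On each block this is the matrix $\sqrt{\pi|_B}\,\sqrt{\pi|_B}^{\mathsf T}/\|\sqrt{\pi|_B}\|^2$, which is precisely the orthogonal projection onto the line spanned by the unit vector $v_B = \sqrt{\pi|_B}/\|\sqrt{\pi|_B}\|$. Hence $S_a$ is symmetric, $S_a^2 = S_a$, and $S_a$ is positive semidefinite; its eigenvalues are $0$ and $1$.

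Now I would form $S = D P D^{-1} = \sum_{a\in\mathcal L}\rho(a)\,S_a$. Since each $S_a$ is symmetric positive semidefinite and $\rho(a)\ge 0$, the matrix $S$ is symmetric positive semidefinite as a convex combination (in fact nonnegative combination) of such matrices: for any vector $z$, $z^{\mathsf T} S z = \sum_a \rho(a)\, z^{\mathsf T} S_a z \ge 0$. Therefore all eigenvalues of $S$ are nonnegative. Finally, $P = D^{-1} S D$ is similar to $S$, so $P$ has exactly the same eigenvalues as $S$; in particular all eigenvalues of $P$ are real and nonnegative, i.e., $\mathcal M$ has no negative eigenvalues.

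The only step requiring care is the verification that $D P_a D^{-1}$ is genuinely symmetric and idempotent — i.e., that the partition structure (II) together with the formula (\ref{Pa}) really produces a block-diagonal matrix whose blocks are rank-one projections. This uses (\ref{condition}): if $y\in\Omega_{x,a}$ then $\Omega_{x,a}=\Omega_{y,a}$, which guarantees that $P_a(x,y)$ and $P_a(y,x)$ are governed by the same block and the same normalising constant $\pi(\Omega_{x,a})=\pi(\Omega_{y,a})$, making $D P_a D^{-1}$ symmetric; and the $P_a P_a = P_a$ identity is the standard fact that conditioning twice on the same partition equals conditioning once. I do not expect any real obstacle beyond bookkeeping; the substance of the argument is entirely contained in the "each $P_a$ is a projection, a nonnegative combination of PSD matrices is PSD" idea.
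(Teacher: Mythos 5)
Your proposal is correct and follows essentially the same route as the paper: conjugate each $P_a$ by $D=\mathrm{diag}(\sqrt{\pi(x)})$ to obtain a symmetric idempotent (hence positive semidefinite) matrix, observe that a nonnegative combination of positive semidefinite matrices is positive semidefinite, and transfer back by similarity. Your explicit identification of each block of $DP_aD^{-1}$ as the rank-one orthogonal projection onto $\sqrt{\pi|_B}$ is a slightly more concrete (and arguably cleaner) way of establishing what the paper gets from idempotence plus reversibility, and incidentally your conjugation $DP_aD^{-1}$ is the one that actually yields a symmetric matrix, whereas the paper writes $Q_a=D^{-1}P_aD$.
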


\begin{proof}
By definition of $P_a$ we know that $P_a(x,y)=0$ if $y\not\in 
\Omega_{x,a}$.  Furthermore (\ref{condition}) implies that
if $z\in\Omega_{x,a}$ then $P_a(x,y) = P_a(z,y)$ for all
$y\in\Omega$.  Therefore, for all $x,y\in\Omega$ and
all $a\in\mathcal{L}$  we have
\begin{align*}
P_a^2(x,y) = \sum_{z\in\Omega} P_a(x,z)\, P_a(z,y)
        = \sum_{z\in\Omega_{x,a}} P_a(x,z)\, P_a(z,y)
        &= \sum_{z\in\Omega_{x,a}} P_a(x,z)\, P_a(x,y)\\
        &= P_a(x,y).
\end{align*}
Hence $P_a^2 = P_a$, so $P_a$ is an idempotent matrix.
It follows that $P_a$ is diagonalisable and the only eigenvalues of $P_a$
are 0 and 1.  (See for example~\cite[Section 3.3, Problem 3]{HJ}.) 

Now let $D$ be the diagonal 
$|\Omega|\times|\Omega|$
matrix with diagonal entries
$(D)_{xx} = \sqrt{\pi(x)}$ for $x\in \Omega$.
Define $Q_a = D^{-1} P_a D$ for all $a\in\mathcal{L}$.
Since $P_a$ is reversible with respect
to $\pi$, it follows that $Q_a$ is symmetric.
Furthermore, $Q_a$ is similar to $P_a$ and hence has the
same eigenvalues as $P_a$.  Therefore $Q_a$ is positive semidefinite,
for all $a\in\mathcal{L}$. (Recall that a matrix is positive
semidefinite if it is symmetric and has no negative eigenvalues.)

Now let $Q = \sum_{a\in\mathcal{L}} \rho(a) Q_a$.
Since $Q$ is a nonnegative linear combination of positive
semidefinite matrices, it follows that $Q$ is positive semidefinite.
(See for example~\cite[Observation 7.1.3]{HJ}.) 
Furthermore, by definition we have $P = DQD^{-1}$, so $P$ has the
same eigenvalues as $Q$.  Therefore $P$ has no negative eigenvalues,
as required.
\end{proof}

\subsection{Implications for the mixing time}\label{ss:mixing}

Let $\mathcal{M}$ be an ergodic, reversible Markov chain with
finite state space $\Omega$, transition matrix $P$ and
stationary distribution $\pi$.
The eigenvalues of $\mathcal{M}$ satisfy 
\[ 1 = \lambda_0 > \lambda_1 \geq \lambda_2 \geq \cdots
           \geq \lambda_{N-1} > -1,\]
where $N=|\Omega|$. 
We refer to $\lambda_{N-1}$ as the \emph{smallest eigenvalue}
of $\mathcal{M}$.  
The connection between the mixing time of a Markov chain and its
eigenvalues is well-known (see~\cite[Proposition 1]{sinclair}):
\begin{equation} \tau(\varepsilon) \leq (1-\lambda_\ast)^{-1}\, 
   \ln \frac{1}{\epsilon\, \pi_{\min}}
   \label{mixtime}
\end{equation}
where $\tau(\varepsilon)$ denotes the mixing time of the Markov chain,
$\pi_{\min} = \min_{x\in \Omega} \pi(x)$ and
\[ \lambda_\ast = \max\{ \lambda_1, \, |\lambda_{N-1}|\}.\]
When studying the mixing time of a Markov chain $\mathcal{M}$ using
(\ref{mixtime}),
the approach which has become standard is to make the chain $\mathcal{M}$
\emph{lazy} by replacing $P$ by $(I+P)/2$,
where $I$ denotes the identity matrix.  Then all eigenvalues of the lazy
chain are nonnegative, and only the second-largest eigenvalue must
be investigated.  Clearly if $P$ has no negative eigenvalues 
then $\lambda_\ast = \lambda_1$ 
and it is not necessary to make the chain lazy. 
Our result can be used to quickly verify this for heat-bath chains.

The bound (\ref{mixtime}) underpins many, but not all, methods
of analysing the mixing time of a Markov chain.
Heat-bath chains are often amenable to analysis using the classical
technique of \emph{coupling}, which is not based on (\ref{mixtime}).
(As examples of coupling analyses of heat-bath chains, see~\cite{AMMV,SV}.)
For such chains, the information provided by Lemma~\ref{heat-bath}
does not directly assist in bounding the mixing time.

However, in several applications including~\cite{CDGJM}, a related heat-bath 
Markov chain is analysed using coupling,
and then a comparison argument~\cite{DSC,DGJM} is applied to deduce rapid
mixing of the original heat-bath chain.   Comparison arguments typically
relate the second-largest eigenvalues of the two chains, and hence they
are often applied to lazy Markov chains.
Lemma~\ref{heat-bath} demonstrates that it is unnecessary
to make heat-bath chains lazy when applying the comparison method.

\section{Two applications}\label{s:applications}

In the special case that $\rho$ is the uniform distribution over
$\mathcal{L}$, the equation defining $P$ is
\begin{equation}
\label{Psum}
 P = \frac{1}{L}\, \sum_{a\in\mathcal{L}} P_a.
\end{equation}

\subsection{Application: a single-site heat-bath chain for spin systems}\label{s:spin}

Let $G=(V,E)$ be an arbitrary graph and let $S$ be a finite set
of spins (or colors). Consider a state space $\Omega \subseteq S^V$
and let $\pi\colon\Omega\to(0,1]$ be a probability distribution.
Given $\sigma\in \Omega$, for all $v\in V$ and $k\in S$
we define $\sigma^{v,k}$ by
\[ \sigma^{v,k}(u) = \begin{cases} \sigma(u) & \text{ if $u\neq v$,}\\
          k & \text{ otherwise.}\end{cases}
\]
(So $\sigma^{v,k}$ is obtained from $\sigma$ by replacing the spin at
$v$ by $k$.)
Additionally define, for $\sigma\in\Omega$ and $v\in V$, the set 
$S_v^\sigma=\{k\in S\colon \sigma^{v,k}\in\Omega\}$.
(For spin systems with soft constraints, such as the Ising or Potts models,
 we have $\Omega=S^V$ and $S_v^\sigma=S$ for all $\sigma\in S^V$, $v\in V$.)
The single-site \emph{heat-bath chain} for $\Omega$
is the Markov chain with transition matrix defined by 
\[ P(\sigma,\tau) = \frac{1}{|V|}\,
 \sum_{v\in V} 
\frac{\pi(\tau)}{\sum_{\ell\in S_v^\sigma} \pi(\sigma^{v,\ell})}\, 
\mathds{1}\bigl(\tau = \sigma^{v,\tau(v)}\bigr)
\]
for all $\sigma,\tau\in\Omega$.
This matches the setting of Lemma~\ref{heat-bath} by choosing
$\mathcal{L}=V$ and $L=|V|$, and defining
\[ \Omega_{\sigma,v} = \{ \sigma^{v,k} : k\in S_v^\sigma\}\]
for all $\sigma\in\Omega$ and $v\in V$.
Hence, by Lemma~\ref{heat-bath}, single-site heat-bath chains for
general spin models do not have negative eigenvalues.

\bigskip

The heat-bath chain, which belongs to the family of Glauber dynamics 
(see for example~\cite{M99}), 
has been studied by many authors including~\cite{hayes,LS,MO}.
In several cases, the continuous-time version of this Markov chain 
is considered. One advantage of this approach is that mixing properties 
can be described solely by the second-largest eigenvalue. 
Thus, when translating these results to discrete time, it 
usually remains to bound the smallest eigenvalue of the chain.
The last example shows that for the heat-bath chain, 
the established continuous-time bounds can be used without further analysis. 
In the case of the Potts model this argument was used in the proof 
of~\cite[Theorem~2.10]{U}.

\bigskip

Note that there are other Glauber dynamics, such as the Metropolis chain, 
which are generally not guaranteed to have only nonnegative eigenvalues.

\subsection{Application: a heat-bath chain for contingency tables}\label{s:contingency}

Let $\boldsymbol{r}=(r_1,\ldots, r_m)$ and $\boldsymbol{c}=(c_1,\ldots, c_n)$ 
be two vectors of positive 
integers with the same sum.  A \emph{contingency table} with row sums $r$ and 
column sums $c$ is an $m\times n$ matrix with nonnegative 
integer entries, such that the $i$'th row sum is $r_i$ and the $j$'th
column sum is $c_j$, for $i=1,\ldots, m$ and $j=1,\ldots, n$.
Let $\Omega_{\boldsymbol{r},\boldsymbol{c}}$ denote the set of all 
contingency tables 
with row sums $\boldsymbol{r}$ and column sums $\boldsymbol{c}$.

Dyer and Greenhill~\cite{DG-contingency} proposed a Markov chain for sampling
contingency tables, which we will call the \emph{contingency chain}.  
A transition of the chain is performed as follows:  choose a $2\times 2$ 
subsquare of the current table uniformly at random, then  
replace this $2\times 2$ subsquare by a uniformly chosen $2\times 2$
nonnegative integer matrix with the same row and column sums.
The \emph{lazy} contingency
chain does nothing at each step with probability $\nfrac{1}{2}$, and otherwise
performs a transition as described above.
Cryan et al.~\cite{CDGJM} analysed the lazy contingency chain for a constant
number of rows and proved that it is rapidly mixing.  

To fit the contingency chain into the setting of
(\ref{Psum}),
let $\mathcal{L}$ be the set of all 
positions of $2\times 2$ subsquares, and let 
$L = |\mathcal{L}| = \binom{m}{2}\binom{n}{2}$.  Let $P_a$ be the
transition matrix of the Markov chain which acts only on the 
$2\times 2$ subsquare $a\in \mathcal{L}$.
Then $P_a(x,\cdot)$ is uniform over all contingency tables
$y\in\Omega_{\boldsymbol{r},\boldsymbol{c}}$ which differ from $x$ only
within the $2\times 2$ subsquare $a$.  Hence Lemma~\ref{heat-bath}
applies (with $\pi$ the uniform distribution on 
$\Omega_{\boldsymbol{r},\boldsymbol{c}}$)
and shows that the contingency chain has no negative eigenvalues.

\section{A transfer result for positive semidefiniteness}

The following result on matrices is well known.
(The proof is easy, and omitted.)

\begin{lemma}
\label{transfer}
Consider a state space $\Omega$ with probability distribution 
$\pi\colon\Omega\to(0,1]$ and let $P$ be a transition matrix on $\Omega$.
Let $\Omega'$ be a second state space with 
probability distribution $\mu\colon\Omega'\to (0,1]$. 
Given any  $|\Omega|\times |\Omega'|$ matrix $R$
with rows indexed by $\Omega$ and columns indexed by $\Omega'$,
the adjoint $R^\ast$ of $R$ is defined by
\[ R^*(y,x)=\frac{\pi(x)}{\mu(y)}\, R(x,y)\quad \text{ for all }\,\,
              x\in\Omega,\,\, y\in\Omega'.  \]
Now suppose that 
$P = R\, T R^*$
where $R$ and $T$ satisfy the following conditions:
\begin{itemize}
\item $R$ is a nonnegative $|\Omega|\times|\Omega'|$ matrix 
such that $\pi R = \mu$ and all rows of $R$ sum to one, and
\item $T$ is a positive semidefinite transition matrix on $\Omega'$
which is reversible with respect to $\mu$. 
\end{itemize}
Then $P$ is also positive semidefinite. 
\end{lemma}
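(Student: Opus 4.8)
The plan is to work with symmetrised versions of all the matrices involved, pushing everything through the similarity transformation $D \mapsto D^{-1} \cdot D$ that was already used in the proof of Lemma~\ref{heat-bath}. Concretely, let $D_\pi$ be the diagonal matrix on $\Omega$ with $(D_\pi)_{xx} = \sqrt{\pi(x)}$ and let $D_\mu$ be the diagonal matrix on $\Omega'$ with $(D_\mu)_{yy} = \sqrt{\mu(y)}$. Set $\widetilde{P} = D_\pi^{-1} P D_\pi$, $\widetilde{T} = D_\mu^{-1} T D_\mu$, and $\widetilde{R} = D_\pi^{-1} R D_\mu$. Since $P$ is reversible with respect to $\pi$ (which follows because $P = RTR^\ast$ and one checks $P$ inherits reversibility, or more simply we derive $P$'s positive semidefiniteness directly from that of $\widetilde P$) and $T$ is reversible with respect to $\mu$, the matrices $\widetilde{P}$ and $\widetilde{T}$ are symmetric, and $\widetilde{T}$ is positive semidefinite because it is similar to $T$.

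The key computational step is to verify that conjugating the identity $P = RTR^\ast$ by $D_\pi$ yields
\[
\widetilde{P} = \widetilde{R}\, \widetilde{T}\, \widetilde{R}^{\mathsf{T}},
\]
where $\widetilde{R}^{\mathsf{T}}$ is the ordinary matrix transpose. For this I would unwind the definition of the adjoint: $R^\ast(y,x) = \frac{\pi(x)}{\mu(y)} R(x,y)$ means precisely $R^\ast = D_\mu^{-2} R^{\mathsf{T}} D_\pi^2$, so
\[
D_\pi^{-1} R\, T\, R^\ast D_\pi
  = D_\pi^{-1} R\, T\, D_\mu^{-2} R^{\mathsf{T}} D_\pi^2 D_\pi^{-1}
  = (D_\pi^{-1} R D_\mu)(D_\mu^{-1} T D_\mu)(D_\mu^{-1} R^{\mathsf{T}} D_\pi)
  = \widetilde{R}\, \widetilde{T}\, \widetilde{R}^{\mathsf{T}},
\]
using $D_\mu^{-1} R^{\mathsf{T}} D_\pi = (D_\pi^{-1} R D_\mu)^{\mathsf{T}} = \widetilde{R}^{\mathsf{T}}$. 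Once this factorisation is in hand, positive semidefiniteness is immediate: for any real vector $v$ indexed by $\Omega$,
\[
v^{\mathsf{T}} \widetilde{P}\, v = v^{\mathsf{T}} \widetilde{R}\, \widetilde{T}\, \widetilde{R}^{\mathsf{T}} v
  = (\widetilde{R}^{\mathsf{T}} v)^{\mathsf{T}}\, \widetilde{T}\, (\widetilde{R}^{\mathsf{T}} v) \geq 0,
\]
since $\widetilde{T}$ is positive semidefinite. Hence $\widetilde{P}$ is a symmetric positive semidefinite matrix, and since $P = D_\pi \widetilde{P} D_\pi^{-1}$ is similar to $\widetilde{P}$, the matrix $P$ has the same (nonnegative real) eigenvalues, so $P$ is positive semidefinite as claimed.

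The only point requiring a little care — and the closest thing to an obstacle — is confirming that the hypotheses on $R$ (nonnegativity, row sums one, $\pi R = \mu$) make $P = RTR^\ast$ a bona fide reversible transition matrix on $\Omega$, so that the assertion ``$P$ is positive semidefinite'' is meaningful in the symmetric sense; but this is routine. Row sums of $P$: since $T$ is stochastic and $R$ has unit row sums, one checks $R^\ast \mathbf{1} = \mathbf{1}$ follows from $\pi R = \mu$ (indeed $(R^\ast \mathbf 1)(y) = \sum_x \frac{\pi(x)}{\mu(y)} R(x,y) = \frac{(\pi R)(y)}{\mu(y)} = 1$), then $T R^\ast \mathbf 1 = \mathbf 1$ and $R T R^\ast \mathbf 1 = \mathbf 1$. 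Nonnegativity of $P$ is clear from nonnegativity of $R$, $T$, $R^\ast$. Reversibility with respect to $\pi$ follows formally from the symmetry of $\widetilde P$ established above (equivalently, $D_\pi^2 P = P^{\mathsf T} D_\pi^2$). Since the problem statement says the proof is easy and omitted, it suffices to record the factorisation $\widetilde{P} = \widetilde{R}\,\widetilde{T}\,\widetilde{R}^{\mathsf{T}}$ and the one-line quadratic-form argument.
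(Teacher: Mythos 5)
The paper omits its proof of this lemma, so there is no official argument to compare against; your symmetrise-and-factor strategy is surely the intended one, and a corrected version of it does prove the result. But the central computation as written does not hold up, for one consistent reason: you conjugate in the wrong direction. With $D_\pi=\operatorname{diag}(\sqrt{\pi})$, the matrix that is symmetric when $P$ is reversible with respect to $\pi$ is $D_\pi P D_\pi^{-1}$, not $D_\pi^{-1}PD_\pi$ (compare the $(x,y)$ and $(y,x)$ entries); likewise $\widetilde T=D_\mu^{-1}TD_\mu$ is similar to $T$ but is \emph{not} symmetric, so it is not positive semidefinite in the sense your quadratic-form step requires. Worse, with your definitions the displayed identity $\widetilde P=\widetilde R\,\widetilde T\,\widetilde R^{\mathsf T}$ is simply false: substituting $R^\ast=D_\mu^{-2}R^{\mathsf T}D_\pi^{2}$ into $D_\pi^{-1}RTR^\ast D_\pi$ leaves a trailing factor $D_\pi^{2}D_\pi=D_\pi^{3}$, not the $D_\pi^{2}D_\pi^{-1}$ you wrote; and $(D_\pi^{-1}RD_\mu)^{\mathsf T}=D_\mu R^{\mathsf T}D_\pi^{-1}$, not the $D_\mu^{-1}R^{\mathsf T}D_\pi$ appearing in your middle expression. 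So the chain of equalities mixes two incompatible conventions and the factorisation it purports to establish does not follow.

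The repair is purely a matter of flipping the conjugations. Set $\widehat P=D_\pi PD_\pi^{-1}$, $\widehat T=D_\mu TD_\mu^{-1}$ and $\widehat R=D_\pi RD_\mu^{-1}$. Then $\widehat R^{\mathsf T}=D_\mu^{-1}R^{\mathsf T}D_\pi$, and from $R^\ast=D_\mu^{-2}R^{\mathsf T}D_\pi^{2}$ one computes $\widehat T\,\widehat R^{\mathsf T}=D_\mu TD_\mu^{-2}R^{\mathsf T}D_\pi=D_\mu TR^\ast D_\pi^{-1}$, whence $\widehat R\,\widehat T\,\widehat R^{\mathsf T}=D_\pi RTR^\ast D_\pi^{-1}=\widehat P$. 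Now $\widehat T$ really is symmetric positive semidefinite, your one-line quadratic-form argument applies verbatim to $\widehat P$, and $P$ is similar to the symmetric PSD matrix $\widehat P$. (The same reversed conjugation appears in the paper's own proof of Lemma~\ref{heat-bath}, where it is harmless because only eigenvalues are used; in your write-up it propagates into the factorisation itself, which is why it must be fixed.) Your closing verifications that $P$ is stochastic, nonnegative and reversible with respect to $\pi$ are correct and worth keeping.
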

Note that we do not assume that $T$ is irreducible. 
(If $R$ is an invertible matrix and $P = RTR^*$ then $P$ and
$T$ are often said to be \emph{congruent}.  But in our applications
$R$ need not be square.)

We now interpret the identity $P = RTR^\ast$ in terms of the
corresponding Markov chains.
Let $\mathcal{M}$ be the Markov chain on $\Omega$ with transition
matrix $P$, and let $\mathcal{M}'$ be the Markov chain on $\Omega'$
with transition matrix $T$.  A transition of $\mathcal{M}$ from current state
$x\in\Omega$ is performed as follows. First, generate an (auxiliary) 
state $x'\in\Omega'$ with 
respect to the probability distribution $R(x,\cdot)$. Then, perform one step 
of the chain $\mathcal{M}'$ from initial state $x'$ 
to obtain $y'\in\Omega'$.  Finally, sample the new state $y\in\Omega$ 
of $\mathcal{M}$ with respect to the distribution $R^*(y',\cdot)$.

Lemma~\ref{transfer} allows us to infer the positive semidefiniteness of
$P$ from the positive semidefiniteness of $T$.  For some applications,
Lemma~\ref{heat-bath} may be used to show that $T$ is positive
semidefinite, while in others we may argue more directly.

As an example, consider the \emph{Swendsen-Wang chain}~\cite{SW} 
for the $q$-state Potts model on a graph $G=(V,E)$.
The state space is $\Omega = \{ 1,2,\ldots, q\}^V$ for some
integer $q\geq 2$. For a fixed constant $\beta \geq 0$, the
stationary distribution of the chain is defined by
\[ \pi(\sigma)=Z^{-1}\, \exp\{\beta |E(\sigma)|\} \quad \text{ for all } \,\,
              \sigma\in\Omega,\] 
where 
\[ E(\sigma)=\{\{u,v\}\in E\colon \sigma(u)=\sigma(v)\bigr\}\] 
denotes the set of monochromatic edges in $\sigma$, and 
$Z$ is the normalizing constant. 
One step of this chain can be described as follows. Given the current state 
$\sigma\in\Omega$, sample a subset $A\subseteq E(\sigma)$ of the monochromatic 
edges such that each edge is included with probability $1-e^{-\beta}$,
with these choices all being independent.
Then, colour each resulting connected component of the subgraph $(V,A)$ with a
new colour chosen from $\{1,\dots,q\}$ uniformly at random, with these
choices all being independent.
(For more details, see for example~\cite{ES,GJ,U}.) 

This fits into the setting of Lemma~\ref{transfer}
if we choose $\pi$, $\Omega$ as above, let
\[ \Omega'=\{(\sigma,A)\colon \sigma\in\Omega,\, A\subseteq E(\sigma)\}\] 
and define $\mu(\sigma,A)=Z^{-1} (e^\beta-1)^{|A|}$ for all $(\sigma,A)\in
\Omega'$. (This $Z$ is the same normalising constant used to define $\pi$.) 
For all $\sigma\in \Omega$ and $(\tau,A)\in\Omega'$, let
\[ R\bigl(\sigma,(\tau,A)\bigr)=e^{-\beta |E(\sigma)|} \,
(e^{\beta}-1)^{|A|}\, \mathds{1}(\sigma=\tau).\] 
Note that with this definition, 
$R^*\bigl((\sigma,A),\tau\bigr)=\mathds{1}(\sigma=\tau)$
for all $\sigma\in\Omega$ and $(\tau,A)\in\Omega'$.
Finally, for all $(\sigma,A),\, (\tau, B)\in\Omega'$, define
\[ T\bigl((\sigma,A),(\tau,B)\bigr)=\mu\bigl((\tau,B)\mid B=A\bigr).\]
It is easy to verify that $\pi R = \mu$ and that
$RTR^*$ equals the transition matrix of the 
Swendsen-Wang chain (see~\cite{ES}).

Now observe that $T$ is idempotent, and hence is positive semidefinite.
We may also conclude this from Lemma~\ref{heat-bath}, since $T$ is a 
(rather trivial) heat-bath Markov chain in the sense of 
Definition~\ref{d:heat-bath}  (where $\mathcal{L} = \{ a\}$ has
a unique element and
setting $\Omega_{x,a}= \Omega$ for all $x\in\Omega$). 
Therefore Lemma~\ref{transfer} shows that the Swendsen-Wang chain 
has no negative eigenvalues, as claimed. 

Another example of a Markov chain which fits the setting of 
Lemma~\ref{transfer}
is the single-bond dynamics for the random-cluster model.
Here Lemma~\ref{heat-bath} is needed in order to prove that
the appropriate matrix $T$ is positive semidefinite.
See~\cite[Section 4.1]{U} for more detail.

\section{A characterisation of heat-bath chains}\label{s:converse}

It follows from the proof of Lemma~\ref{heat-bath} that
any nonnegative linear combination of stochastic idempotent matrices
has only nonnegative eigenvalues.
This leads us to ask whether Lemma~\ref{heat-bath} can be
generalised to a wider class of Markov chains.
To explore this question, we need some more definitions.

We use the symbols  $\boldsymbol{0}$, $\boldsymbol{1}$ to denote any
column vector or row vector with each entry equal to 0, 1
(respectively), of the appropriate size.  We use symbols
$\boldsymbol{a}$, $\boldsymbol{b}$, $\ldots$ to denote column vectors,
and use $\boldsymbol{\alpha}$, $\boldsymbol{\beta}$, $\boldsymbol{\gamma}$, $\ldots$
to denote row vectors.  Unless otherwise noted, the sizes of 
matrices and vectors can be inferred from the context.

A matrix $M$ is called \emph{substochastic} if 
it is nonnegative and
$M\boldsymbol{1} \leq \boldsymbol{1}$. 
A square matrix $M$ is called \emph{permutation similar} to
a matrix $U$ if there is a permutation matrix $A$ such that
$U = A^T MA$.  
This operation corresponds to applying some permutation
to both the rows and the columns of $M$  to obtain $U$.
Since $A^T = A^{-1}$ this is also a matrix
similarity. 
We write $U\cong M$ to show that $U$ and $M$ are permutation
equivalent (or $U\cong_A M$ to specify the permutation matrix $A$).

Note that $M$ is reversible if and only if
there is some nonnegative diagonal matrix $D$ such that 
$MD = DM^T$. In particular, if all diagonal entries of $D$ are positive then
$D^{-1}MD = M^T$.

\begin{remark}
\label{similar}
The equivalence class of $\cong_A$ is closed under multiplication
and the taking of transposes, for all permutation matrices $A$.
Hence if $M$ is stochastic, idempotent or reversible, then so is
any matrix $U$ with $U\cong M$.
\end{remark}

We say that a matrix is an \emph{SI matrix} if it is stochastic
and idempotent.  
An \emph{$r$-SI matrix} will refer to an SI matrix with rank $r$.  
We wish to obtain a characterisation of SI matrices.  
First we consider a generalisation of the 
stochastic case, which we will need later.

\begin{lemma}
Let $M$ be an irreducible, substochastic, idempotent matrix. 
Then $M$ is a 1-SI matrix.  Moreover,
$M = \boldsymbol{1}\boldsymbol{\pi}$, where $\boldsymbol{\pi}$ is a positive vector
with $\boldsymbol{\pi}\boldsymbol{1} = 1$.
\label{med:lem03}
\end{lemma}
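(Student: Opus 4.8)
The plan is to exploit idempotency and irreducibility together. First I would recall that any idempotent matrix $M$ satisfies $M(I-M)=0$, so every column of $I-M$ lies in the kernel of (left-multiplication by) $M$, and dually every row of $M$ is a left-eigenvector of $M$ with eigenvalue $1$; in particular $\operatorname{rank}(M)+\operatorname{rank}(I-M)=n$ where $n=|\Omega|$. So it suffices to show that a nonzero irreducible substochastic idempotent $M$ has rank exactly $1$. I would argue this by looking at the structure of the eigenvalue-$1$ eigenspace, or equivalently at the action of $M$ on nonnegative vectors.

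Here is the route I expect to work most cleanly. Since $M$ is substochastic and nonnegative, its spectral radius is at most $1$. Idempotency forces every eigenvalue to be $0$ or $1$, so the spectral radius is either $0$ or $1$; if it were $0$ then $M$ would be nilpotent and idempotent, hence $M=0$, contradicting irreducibility (an irreducible matrix of size $\ge 1$ is nonzero, and for size $1$ the single entry would have to be $0$, contradicting irreducibility in the usual convention — or one simply notes $M=0$ is not irreducible). So the spectral radius equals $1$. Now apply Perron–Frobenius to the irreducible nonnegative matrix $M$: it has a positive right eigenvector $\boldsymbol{b}>\boldsymbol{0}$ with $M\boldsymbol{b}=\boldsymbol{b}$, and the Perron eigenvalue $1$ is algebraically (hence geometrically) simple. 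Simplicity of the eigenvalue $1$ means $\operatorname{rank}(I-M)=n-1$, so $\operatorname{rank}(M)=1$, i.e. $M$ is a $1$-SI matrix.

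It remains to pin down the form $M=\boldsymbol{1}\boldsymbol{\pi}$ and to upgrade ``substochastic'' to ``stochastic''. Since $M$ has rank $1$, write $M=\boldsymbol{b}\boldsymbol{\pi}$ for some nonzero column vector $\boldsymbol{b}$ and nonzero row vector $\boldsymbol{\pi}$. Idempotency gives $\boldsymbol{b}(\boldsymbol{\pi}\boldsymbol{b})\boldsymbol{\pi}=\boldsymbol{b}\boldsymbol{\pi}$, hence the scalar $\boldsymbol{\pi}\boldsymbol{b}=1$. From Perron–Frobenius applied to $M$ (irreducible) the right Perron eigenvector $\boldsymbol{b}$ may be taken strictly positive, and applied to $M^T$ (also irreducible, by Remark~\ref{similar}-type reasoning or directly) the left Perron eigenvector $\boldsymbol{\pi}$ is strictly positive; rescaling $\boldsymbol{b}$ and $\boldsymbol{\pi}$ reciprocally we may take $\boldsymbol{b}=\boldsymbol{1}$, since $M\boldsymbol{1}\le\boldsymbol{1}$ combined with $M\boldsymbol{b}=\boldsymbol{b}$ and positivity of $\boldsymbol{b}$ forces (after checking) that the row sums are constant equal to $1$ — this is the one spot needing a small argument: from $M=\boldsymbol{b}\boldsymbol{\pi}$ the $x$-th row sum is $b_x(\boldsymbol{\pi}\boldsymbol{1})$, and $M\boldsymbol{b}=\boldsymbol{b}$ reads $b_x(\boldsymbol{\pi}\boldsymbol{b})=b_x$, consistent with $\boldsymbol{\pi}\boldsymbol{b}=1$; substochasticity says $b_x(\boldsymbol{\pi}\boldsymbol{1})\le1$ for all $x$, and I would deduce $\boldsymbol{\pi}\boldsymbol{1}=1/\max_x b_x$ cannot be made compatible unless all $b_x$ are equal, so $\boldsymbol{b}$ is a constant multiple of $\boldsymbol{1}$ and then $M=\boldsymbol{1}\boldsymbol{\pi}$ with $\boldsymbol{\pi}\boldsymbol{1}=1$, which is automatically stochastic. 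Thus $M$ is a $1$-SI matrix of the claimed form.

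The main obstacle I anticipate is the bookkeeping around substochastic-versus-stochastic: Perron–Frobenius for irreducible nonnegative matrices gives the strictly positive eigenvectors and simplicity of the spectral radius immediately, so the rank-$1$ conclusion is quick, but one must be careful that $\boldsymbol{b}$ can be normalised to $\boldsymbol{1}$ precisely because the row-sum vector $M\boldsymbol{1}$ is itself an eigenvector-type quantity — the cleanest phrasing is: $M$ rank $1$ with positive left and right Perron vectors forces $M=\boldsymbol{b}\boldsymbol{\pi}$ with $\boldsymbol{b},\boldsymbol{\pi}>\boldsymbol{0}$ and $\boldsymbol{\pi}\boldsymbol{b}=1$; then the constraint $M\boldsymbol{1}\le\boldsymbol{1}$, i.e. $(\boldsymbol{\pi}\boldsymbol{1})\,\boldsymbol{b}\le\boldsymbol{1}$, together with $M\boldsymbol{b}=\boldsymbol{b}$ pins $\boldsymbol{b}$ down as a scalar times $\boldsymbol{1}$. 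Once that is settled, normalising so $\boldsymbol{b}=\boldsymbol{1}$ yields $\boldsymbol{\pi}\boldsymbol{1}=1$ and $M=\boldsymbol{1}\boldsymbol{\pi}$ is stochastic, completing the proof.
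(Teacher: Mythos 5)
Your proposal is correct and follows essentially the same route as the paper: idempotence restricts the eigenvalues to $\{0,1\}$, irreducibility plus Perron--Frobenius makes the spectral radius $1$ a simple eigenvalue with positive left and right eigenvectors, and hence $M$ has rank one and the stated form. The only divergence is the substochastic-to-stochastic step, where the paper simply cites the fact that an irreducible substochastic matrix which is not stochastic has spectral radius strictly less than one, whereas you argue directly from the rank-one factorisation; your phrasing of that step is a little loose but is repaired in one line via $1=\boldsymbol{\pi}\boldsymbol{b}\le(\max_x b_x)(\boldsymbol{\pi}\boldsymbol{1})\le 1$, which forces equality throughout and hence $\boldsymbol{b}$ constant because $\boldsymbol{\pi}>\boldsymbol{0}$.
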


\begin{proof}
By idempotence, 0 and 1 are the only possible eigenvalues of $M$.
Since $M$ is irreducible, at least one eigenvalue of $M$ is nonzero.
This implies that $M$ is stochastic, since otherwise $M$ is irreducible
and substochastic, but not stochastic: such matrices have spectral
radius strictly less than one, see~\cite[Corollary 6.2.28]{HJ}. 
Finally, using \cite[Theorem 8.4.4]{HJ}, we obtain that 1 is a simple eigenvalue of $M$. 
This shows that $M$ is a 1-SI matrix and hence is of the form 
$M = \boldsymbol{1}\boldsymbol{\pi}$, where $\boldsymbol{\pi}$ is a positive vector
and $\boldsymbol{\pi}\boldsymbol{1} = 1$.
\end{proof}

This immediately yields the following.

\begin{corollary}\label{med:cor01}
An SI matrix is irreducible if and only if it is a 1-SI matrix.
Furthermore, a matrix $M$ is 1-SI if and only if 
$M = \boldsymbol{1}\boldsymbol{\pi}$
where $\boldsymbol{\pi}$ is positive and $\boldsymbol{\pi}\boldsymbol{1} = 1$.
\end{corollary}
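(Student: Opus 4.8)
The plan is to prove Corollary~\ref{med:cor01} by reducing both assertions to Lemma~\ref{med:lem03} and Remark~\ref{similar}. For the first assertion, observe that a 1-SI matrix has rank one, and by Lemma~\ref{med:lem03} (or rather its conclusion applied to the stochastic case) any such matrix has the form $\boldsymbol{1}\boldsymbol{\pi}$ with $\boldsymbol{\pi}$ positive; since $\boldsymbol{\pi}$ has all entries strictly positive, the matrix has a single communicating class and is therefore irreducible. Conversely, if an SI matrix $M$ is irreducible, then it is in particular substochastic and idempotent, so Lemma~\ref{med:lem03} applies directly and gives that $M$ is 1-SI.

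For the second assertion I would argue both directions. If $M = \boldsymbol{1}\boldsymbol{\pi}$ with $\boldsymbol{\pi}$ positive and $\boldsymbol{\pi}\boldsymbol{1}=1$, then $M\boldsymbol{1} = \boldsymbol{1}(\boldsymbol{\pi}\boldsymbol{1}) = \boldsymbol{1}$ and $M^2 = \boldsymbol{1}\boldsymbol{\pi}\boldsymbol{1}\boldsymbol{\pi} = \boldsymbol{1}\boldsymbol{\pi} = M$, so $M$ is stochastic and idempotent; and $M$ has rank one since every row equals $\boldsymbol{\pi}$, so $M$ is 1-SI. For the reverse direction, a 1-SI matrix is by definition irreducible (by the first assertion, or directly: an SI matrix of rank one automatically has positive left eigenvector — but this is exactly what we want to conclude, so one must instead invoke Lemma~\ref{med:lem03}), hence Lemma~\ref{med:lem03} gives $M = \boldsymbol{1}\boldsymbol{\pi}$ with $\boldsymbol{\pi}$ positive and $\boldsymbol{\pi}\boldsymbol{1}=1$.

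I do not anticipate a serious obstacle: the corollary is essentially a repackaging of Lemma~\ref{med:lem03}. The one subtlety worth being careful about is circularity in the ``1-SI $\Rightarrow$ form'' direction — one should not silently assume that a rank-one SI matrix is irreducible in order to invoke Lemma~\ref{med:lem03}, since Lemma~\ref{med:lem03} itself was the tool used to establish that equivalence. The cleanest route is to note that Lemma~\ref{med:lem03} is already stated for \emph{substochastic} idempotent matrices and only requires irreducibility, and to check that rank one plus stochastic forces irreducibility by a short direct argument (a rank-one stochastic matrix must have all rows equal, hence equal to some probability vector $\boldsymbol{\pi}$; if $\boldsymbol{\pi}$ had a zero entry the corresponding column would be zero, contradicting nothing by itself, so instead one uses idempotence: $M=M^2$ forces $\boldsymbol{\pi} = \boldsymbol{\pi} M = \boldsymbol{\pi}$ trivially — the genuine point is that $\boldsymbol{\pi}\boldsymbol{1}=1$ and $\boldsymbol{\pi}\geq\boldsymbol{0}$ already make $\boldsymbol{\pi}M = (\boldsymbol{\pi}\boldsymbol{1})\boldsymbol{\pi} = \boldsymbol{\pi}$, so positivity of $\boldsymbol{\pi}$ must come from elsewhere). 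To sidestep this entirely, I would simply apply Lemma~\ref{med:lem03} in the form: every rank-one SI matrix, being the stochastic special case, satisfies its hypotheses once irreducibility is observed from the structure, and irreducibility of $\boldsymbol{1}\boldsymbol{\pi}$-type matrices with $\boldsymbol{\pi}>\boldsymbol{0}$ is immediate. In the write-up I will phrase the argument so that Lemma~\ref{med:lem03} is the sole nontrivial input and all remaining steps are the one-line verifications above.
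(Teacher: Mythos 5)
Your treatment of the two easy directions (irreducible SI $\Rightarrow$ 1-SI via Lemma~\ref{med:lem03}, and $M=\boldsymbol{1}\boldsymbol{\pi}$ with $\boldsymbol{\pi}$ positive and $\boldsymbol{\pi}\boldsymbol{1}=1$ $\Rightarrow$ 1-SI by direct computation) is fine, and is exactly what the paper has in mind when it offers no proof beyond ``this immediately yields the following.'' The problem is the remaining direction, 1-SI $\Rightarrow$ irreducible (equivalently, 1-SI $\Rightarrow$ $M=\boldsymbol{1}\boldsymbol{\pi}$ with $\boldsymbol{\pi}$ \emph{positive}). You correctly sense the danger --- you even write that ``positivity of $\boldsymbol{\pi}$ must come from elsewhere'' --- but your proposed sidestep is circular: you propose to ``observe irreducibility from the structure,'' where the structure in question ($\boldsymbol{\pi}>\boldsymbol{0}$) is precisely the conclusion you are trying to reach. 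As described, your write-up would contain no actual argument for this implication.

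Moreover, the gap cannot be closed from the definitions as literally given: with ``1-SI'' meaning stochastic, idempotent and of rank one, the matrix
\[
M=\left[\begin{array}{cc}1&0\\1&0\end{array}\right]
\]
is 1-SI but reducible, and equals $\boldsymbol{1}\boldsymbol{\pi}$ only for $\boldsymbol{\pi}=(1,0)$, which is not positive. This is exactly the ephemeral-state phenomenon the paper introduces immediately after the corollary (compare Theorem~\ref{med:thm01} with $t\geq 1$, and Corollary~\ref{med:cor04} with $k=1$), so the corollary must be read with the implicit hypothesis that $M$ has no zero columns. Under that hypothesis the missing direction really is one line: a rank-one stochastic matrix has all rows equal to a single probability vector $\boldsymbol{\pi}$, its $j$th column is $\pi_j\boldsymbol{1}$, so the absence of zero columns forces $\boldsymbol{\pi}>\boldsymbol{0}$, and irreducibility of $\boldsymbol{1}\boldsymbol{\pi}$ with $\boldsymbol{\pi}>\boldsymbol{0}$ is then immediate. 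The paper only ever invokes the corollary in the two directions you did establish, so nothing downstream is harmed; but to make your proof complete you should either add the no-zero-columns hypothesis together with that one-line column argument, or restrict the claim to the two valid implications.
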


A direct sum of 1-SI matrices is an SI matrix.
However, an SI matrix need not be permutation
equivalent to a direct sum of 1-SI matrices.  
Consider, for example, the matrix
\[ M=\left[\begin{array}{ccc}
\nicefrac12&\nicefrac12 & 0\\
1&0 & 0\\
0&1 & 0\end{array}\right].\]
Clearly $M$ is stochastic, and it is easy to check that $M^2=M$, 
so $M$ is idempotent. But $M$ cannot be permuted to a direct sum of 
1-SI matrices. This is due to the zero column in $M$, which
corresponds to a state which is inaccessible from any state,
including itself. Such a state $y\in\Omega$ is called \emph{ephemeral}
with respect to the Markov chain $\mathcal{M}$ corresponding to $M$.
Ephemeral states can only appear as the initial state of the chain.

The following characterisation of SI matrices depends on the number
of ephemeral states.

\begin{theorem}\label{med:thm01}
Let $M$ be a nonnegative square matrix.
Then $M$ is an SI matrix with exactly $t$ zero columns
if and only if $M\cong U$, where $U$ has the form
\begin{equation}\label{med:eq02}
U=\left[\begin{array}{cccccc}
\boldsymbol{1}\boldsymbol{\pi}_1&0&0&\cdots&0 &0\\
0&\boldsymbol{1}\boldsymbol{\pi}_2&0&\cdots&0 &0\\
0&0&\boldsymbol{1}\boldsymbol{\pi}_3&\cdots&0 &0\\
\vdots&\vdots&\vdots&\ddots&\vdots & \vdots\\
0&0&0&\cdots&\boldsymbol{1}\boldsymbol{\pi}_k &0\\
\boldsymbol{p}_1\boldsymbol{\pi}_1 &
\boldsymbol{p}_2\boldsymbol{\pi}_2 &
\boldsymbol{p}_3\boldsymbol{\pi}_3 &
\cdots &
\boldsymbol{p}_k\boldsymbol{\pi}_k &
0
\end{array}\right]
\end{equation}
for some nonnegative vectors $\boldsymbol{p}_i$ and positive vectors $\boldsymbol{\pi}_i$ 
which satisfy
$\boldsymbol{\pi}_i\boldsymbol{1}=1$
for $i=1,\ldots, k$, and $\sum_{i=1}^k \boldsymbol{p}_i = \boldsymbol{1}$.
(Here all diagonal blocks are square, though not necessarily
of the same size: the last diagonal block has size $t\times t$.)
\end{theorem}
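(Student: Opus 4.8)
The plan is to prove both directions of the equivalence, starting with the easier ``if'' direction and then tackling the structural ``only if'' direction by a careful combinatorial decomposition of the state space.

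For the ``if'' direction, suppose $M \cong U$ with $U$ of the form \eqref{med:eq02}. Since permutation similarity preserves the properties stochastic, idempotent, and the number of zero columns (Remark~\ref{similar}), it suffices to check that $U$ itself is stochastic, idempotent, and has exactly $t$ zero columns. That $U$ has exactly $t$ zero columns is immediate from the block structure, since each $\boldsymbol{\pi}_i$ is positive and the last block column is genuinely zero. Stochasticity follows by summing across each row: a row in the $i$-th block of the first $k$ block-rows sums to $\boldsymbol{\pi}_i \boldsymbol{1} = 1$, while a row in the last block-row sums to $\sum_{i=1}^k \boldsymbol{p}_i \boldsymbol{\pi}_i \boldsymbol{1} = \sum_{i=1}^k \boldsymbol{p}_i = \boldsymbol{1}$ (read off the relevant entry). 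Idempotence is a direct block computation: the key identities are $(\boldsymbol{1}\boldsymbol{\pi}_i)(\boldsymbol{1}\boldsymbol{\pi}_i) = \boldsymbol{1}(\boldsymbol{\pi}_i\boldsymbol{1})\boldsymbol{\pi}_i = \boldsymbol{1}\boldsymbol{\pi}_i$, $(\boldsymbol{p}_i\boldsymbol{\pi}_i)(\boldsymbol{1}\boldsymbol{\pi}_i) = \boldsymbol{p}_i\boldsymbol{\pi}_i$, and the off-diagonal products vanish because $\boldsymbol{\pi}_i\boldsymbol{1}$ meets a different block's $\boldsymbol{1}$ only when zero blocks intervene; the last block-row of $U^2$ works out to $\boldsymbol{p}_j\boldsymbol{\pi}_j$ in column block $j$ using $\sum_i \boldsymbol{p}_i = \boldsymbol{1}$ is not even needed here, only $\boldsymbol{\pi}_i\boldsymbol{1}=1$. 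So this direction is routine bookkeeping.

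The ``only if'' direction is the heart of the matter and where I expect the real work. Let $M$ be an SI matrix with exactly $t$ zero columns. A zero column at index $y$ corresponds precisely to an ephemeral state: $M(x,y) = 0$ for all $x$. Let $Z$ be the set of ephemeral states, $|Z| = t$, and let $\Omega \setminus Z$ be the remaining states. The first step is to argue that restricting $M$ to rows and columns in $\Omega\setminus Z$ yields a matrix $M'$ which is still stochastic (the only mass lost from a non-ephemeral row would go to ephemeral columns, but I claim an idempotent stochastic matrix sends no mass to ephemeral states from non-ephemeral states — because if $M(x,z) > 0$ for ephemeral $z$, then $M^2(x,z) = \sum_w M(x,w)M(w,z) = 0$ since every $M(w,z) = 0$, contradicting $M^2 = M$; wait, that shows $M(x,z) = 0$ for all $x$, which is just the definition of ephemeral, so actually non-ephemeral rows automatically have zero mass on ephemeral columns). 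Hence $M'$ is stochastic and idempotent on $\Omega\setminus Z$, with no zero columns. Now decompose $\Omega\setminus Z$ into the communicating classes of the Markov chain of $M'$: since $M'$ has no zero columns and is idempotent, I should show that its communicating-class structure has no nontrivial transient behavior, i.e. the classes partition into recurrent classes $C_1,\dots,C_k$ with no arrows between distinct classes. This uses idempotence crucially: if there were an arrow from class $C_i$ into a different class $C_j$ but $M'$ restricted to $C_i$ is already stochastic (which it is, since $C_i$ is recurrent or can be shown stochastic by the same mass-conservation argument), we get a contradiction. On each $C_i$, the matrix $M'|_{C_i}$ is irreducible, stochastic, and idempotent, hence a 1-SI matrix $\boldsymbol{1}\boldsymbol{\pi}_i$ by Corollary~\ref{med:cor01}. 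Finally, put the blocks $C_1,\dots,C_k$ first (in any order) and $Z$ last: the off-diagonal blocks among the $C_i$ are zero by the no-cross-arrows property, the block from $C_i$ to $Z$ is $M(x,z)$ for $x\in C_i$, $z\in Z$, which factors as $\boldsymbol{p}_i\boldsymbol{\pi}_i$ because the rows within $C_i$ must be constant (idempotence forces $M(x,z) = M^2(x,z) = \sum_{w\in C_i} M(x,w) M(w,z) = \sum_{w\in C_i}(\boldsymbol{1}\boldsymbol{\pi}_i)(x,w) M(w,z) = \sum_w \boldsymbol{\pi}_i(w) M(w,z)$, independent of $x$), and the $Z\times$(anything) rows are zero because $Z$ columns are zero in $M$, so in particular the $Z\times Z$ block is $0$.

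The main obstacle I anticipate is cleanly establishing the ``no arrows between distinct communicating classes'' claim and the ``each class restriction is stochastic'' claim simultaneously, since these are somewhat intertwined: the usual Markov-chain fact is that only recurrent classes can appear if there is no transient mass escaping, but here I must derive this purely from idempotence rather than assume ergodicity. The cleanest route is probably: for any state $x$, the row $M(x,\cdot)$ is a probability distribution, and idempotence says $M(x,\cdot) = M(x,\cdot) M$, i.e. $M(x,\cdot)$ is a \emph{stationary} distribution for the row-stochastic action; a stationary distribution is supported on recurrent states and is a convex combination of the ergodic stationary distributions $\boldsymbol{\pi}_i$, one per recurrent class. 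From this one reads off that the support of $M(x,\cdot)$ is a union of recurrent classes, and a further application of idempotence (or irreducibility of $M'$ on a class once we know $x$ lies in a class) pins it to a single class for $x$ in a recurrent class and gives the vector $\boldsymbol{p}_i(x)$ of weights for $x$ ephemeral. I would present this stationarity observation up front, as it organizes the whole argument and makes the block form \eqref{med:eq02} fall out naturally, with $\sum_i \boldsymbol{p}_i = \boldsymbol{1}$ encoding that the ephemeral rows are probability vectors distributed over the $k$ classes.
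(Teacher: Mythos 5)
Your overall strategy is sound and genuinely different from the paper's. The paper handles the ``only if'' direction by deleting the zero columns and the corresponding rows, invoking the canonical block upper-triangular form for nonnegative matrices from Horn and Johnson (diagonal blocks irreducible or zero), squaring to see that each diagonal block is idempotent, and then inducting: Lemma~\ref{med:lem03} forces the top-left block to be stochastic, which kills the off-diagonal blocks in its block-row. You instead observe that idempotence makes every row $M(x,\cdot)$ a stationary distribution of $M$, hence supported on the closed communicating classes; combined with the absence of zero columns in the reduced matrix $M'$ this rules out transient states entirely, so the non-ephemeral states split into closed classes on each of which $M'$ restricts to an irreducible SI matrix, i.e.\ a $1$-SI matrix by Corollary~\ref{med:cor01}. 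This is a cleaner, more probabilistic route that avoids the citation to the canonical form, at the cost of importing the standard fact that a stationary distribution of a finite chain vanishes on inessential states (prove or cite this explicitly). Your ``if'' direction matches the paper's, which simply omits the computation.

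The one step you must repair is the final assembly, where you have transposed rows and columns in the ephemeral blocks. As written you assert that the block with rows in $C_i$ and columns in $Z$ equals $\boldsymbol{p}_i\boldsymbol{\pi}_i$, and that ``the $Z\times$(anything) rows are zero because $Z$ columns are zero in $M$''. Both claims are false: a zero column does not force a zero row, and if the rows indexed by $Z$ were zero then $M$ would not be stochastic. The correct statement, which your closing paragraph actually describes, is the reverse: the $(C_i,Z)$ and $(Z,Z)$ blocks vanish because the \emph{columns} indexed by $Z$ are zero by definition of ephemeral, while for $z\in Z$ and $y\in C_i$ idempotence gives
\[
M(z,y)=\sum_{w}M(z,w)\,M(w,y)=\Bigl(\sum_{w\in C_i}M(z,w)\Bigr)\boldsymbol{\pi}_i(y)=\boldsymbol{p}_i(z)\,\boldsymbol{\pi}_i(y),
\]
using that $M(z,w)=0$ for $w\in Z$ and that $M(w,\cdot)$ equals $\boldsymbol{\pi}_i$ on $C_i$ and vanishes off $C_i$ for $w\in C_i$; stochasticity of the $Z$ rows then yields $\sum_{i=1}^k\boldsymbol{p}_i=\boldsymbol{1}$. (Your displayed computation $M(x,z)=\sum_{w\in C_i}\boldsymbol{\pi}_i(w)M(w,z)$ with $z$ ephemeral simply evaluates to $0$, so it proves nothing about the $\boldsymbol{p}_i$.) With the indices straightened out, the argument is complete.
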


\begin{proof}
Suppose that $M$ is an SI matrix with exactly $t$ zero columns.
Let $M'$ be the matrix obtained from $M$ by removing the $t$ zero
columns as well as the corresponding rows.  Then $M'$ is still
stochastic, and $(M')^2 = M'$, so $M'$ is an SI matrix with 
no zero columns.
It is known~\cite[Section 8.3, Problem 8]{HJ} that $M'\cong U'$, where
\begin{equation}\label{med:eq03}
U'=\left[\begin{array}{ccccc}
A_{11}&A_{12}&A_{13}&\cdots&A_{1k}\\
0&A_{22}&A_{23}&\cdots&A_{2k}\\
0&0&A_{33}&\cdots& A_{3k}\\
\vdots&\vdots&\vdots&\ddots&\vdots\\
0&0&0&\cdots&A_{kk}\end{array}\right]
\end{equation}
such that $A_{ij}\geq 0$ for $1\leq i\leq j\leq k$, and 
$A_{ii}$ is square 
and either irreducible or zero, for $i=1,\ldots, k$. 
Squaring $U'$ gives
\begin{equation}
\label{med:eq04}
(U')^2=\left[\begin{array}{ccccc}
A_{11}^2& B_{12}& B_{13} &\cdots&B_{1k}\\
0&A_{22}^2& B_{23}&\cdots&B_{2k}\\
0&0&A_{33}^2&\cdots& B_{3k}\\
\vdots&\vdots&\vdots&\ddots&\vdots\\
0&0&0&\cdots&A_{kk}^2\end{array}\right],
\end{equation}
for some $B_{ij}\geq 0$ $(i<j\leq k)$.
Hence we have $A_{ii}^2=A_{ii}$ for $i=1,\ldots, k$. 
In particular $U_1=A_{11}$ is idempotent, and $U_1$ is substochastic since 
$U$ is stochastic. Since $U$ has no zero column, $U_1\neq 0$ and hence 
$U_1$ is irreducible. Therefore by Lemma~\ref{med:lem03} it follows that
$U_1$ is stochastic, which implies that $A_{1j}=0$ for $j=2,\ldots, k$. 
Thus $U'=U_1\oplus U''$, where $U_1$ is a 1-SI matrix, and $U''$ is an 
SI matrix with no zero column, or is empty if $k=1$ (in which case $U'=U_1$
is a 1-SI matrix).
By induction, it follows that
$U'= U_1\oplus \cdots \oplus U_k$, where $U_i$ is a 1-SI matrix for $i=1,\ldots, k$.
Applying Corollary~\ref{med:cor01} shows that
$U_i = \boldsymbol{1}\boldsymbol{\pi}_i$, 
where $\boldsymbol{\pi}_i$ is a positive vector which sums to 1, for $i=1,\ldots, k$.

Hence we know that $M\cong U$ where
\[
U =\left[\begin{array}{cccccc}
U_1 & 0& 0&\cdots& 0 & 0\\
0&U_2 & 0&\cdots&0 & 0\\
0&0&U_{3}&\cdots& 0 & 0\\
\vdots&\vdots&\vdots&\ddots&\vdots & \vdots \\
0&0&0&\cdots&U_{k} & 0\\
C_1 & C_2 & C_3 & \cdots & C_k & 0\end{array}\right]
\]
for some nonnegative matrices $C_1,\ldots, C_k$.
Now $U^2=U$, which implies that
\[ C_i = C_iU_i = C_i \boldsymbol{1}\boldsymbol{\pi}_i \]
for $i=1,\ldots, k$.  Let $\boldsymbol{p}_i = C_i\boldsymbol{1}$,
which is a nonnegative vector. 
Then $C_i = \boldsymbol{p}_i\boldsymbol{\pi}_i$ 
and 
\[ \sum_{i=1}^k \boldsymbol{p}_i = \sum_{i=1}^k C_i \boldsymbol{1} = 
  [ \, C_1  \, C_2 \, \cdots \, C_k \, ] \, \boldsymbol{1} = \boldsymbol{1},\]
as $U$ is stochastic.
This completes the proof of the ``only if'' statement.

For the converse, it suffices to assume that $M$ satisfies
(\ref{med:eq02}), by Remark~\ref{similar}.  Then it is not difficult to check that
$M$ is a SI matrix.
\end{proof}

\begin{corollary}\label{med:cor04}
A matrix $M$ is an $r$-SI matrix if and only if $M\cong U$, where $U$ has the 
form~\eqref{med:eq02} with $k=r$.
\end{corollary}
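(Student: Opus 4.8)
The claim is that $M$ is an $r$-SI matrix if and only if $M \cong U$ where $U$ has the form \eqref{med:eq02} with exactly $k = r$ diagonal blocks. The plan is to deduce this directly from Theorem \ref{med:thm01} by computing the rank of the canonical form $U$. First I would invoke Theorem \ref{med:thm01}: an SI matrix $M$ is permutation similar to some $U$ of the shape \eqref{med:eq02}, with $k$ diagonal blocks of the form $\boldsymbol{1}\boldsymbol{\pi}_i$ together with a final zero block of size $t \times t$ (the $t$ zero columns). Since rank is invariant under permutation similarity, it suffices to show that $\operatorname{rank}(U) = k$, independent of $t$ and of the vectors $\boldsymbol{p}_i$.

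The key computation is that $\operatorname{rank}(U) = k$. Each block $\boldsymbol{1}\boldsymbol{\pi}_i$ is a nonzero rank-one matrix (it is an outer product of nonzero vectors, since $\boldsymbol{\pi}_i$ is positive and $\boldsymbol{1} \neq \boldsymbol{0}$). The block-lower-triangular structure of $U$ means its column space is spanned by the columns within the $k$ block-columns (the last block-column is entirely zero). Within the $i$-th block-column, every column is a scalar multiple of the single vector obtained by stacking the $i$-th standard block position with entry from $\boldsymbol{1}$ on top of $\boldsymbol{p}_i$ below — more precisely, the $i$-th block-column of $U$ equals $\begin{bmatrix} \boldsymbol{0} \\ \vdots \\ \boldsymbol{1} \\ \vdots \\ \boldsymbol{p}_i \end{bmatrix} \boldsymbol{\pi}_i$, again a rank-one matrix. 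So each of the $k$ block-columns contributes rank exactly one to $U$. Moreover these $k$ rank-one contributions are linearly independent: the $i$-th one has its "$\boldsymbol{1}$" in the $i$-th diagonal block slot, and these slots are disjoint, so a nontrivial linear combination vanishing would force all coefficients to be zero (look at the diagonal-block rows). Hence $\operatorname{rank}(U) = k$.

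Putting these together: if $M$ is $r$-SI, then $M \cong U$ as in \eqref{med:eq02} with some number $k$ of diagonal blocks, and $r = \operatorname{rank}(M) = \operatorname{rank}(U) = k$, so $k = r$. Conversely, if $M \cong U$ with \eqref{med:eq02} having $k = r$ blocks, then by the "if" direction of Theorem \ref{med:thm01} $M$ is an SI matrix, and $\operatorname{rank}(M) = \operatorname{rank}(U) = k = r$, so $M$ is $r$-SI. The only step needing any care is the rank computation, and even that is routine once one observes the block-column decomposition into rank-one outer products with disjointly-supported left factors; I do not anticipate a genuine obstacle, as the argument is essentially bookkeeping on the canonical form already supplied by Theorem \ref{med:thm01}.
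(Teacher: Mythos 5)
Your proposal is correct and takes essentially the same approach as the paper: both reduce to the canonical form $U$ via Theorem~\ref{med:thm01} (and Remark~\ref{similar}) and then show directly from the block structure that $\operatorname{rank}(U)=k$. The only cosmetic difference is that you decompose $U$ into $k$ rank-one block-columns with disjointly supported left factors, whereas the paper exhibits $k$ linearly independent rows $\boldsymbol{\rho}_i$ and checks that every other row is a linear combination of them --- the dual of the same calculation.
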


\begin{proof}
By Remark~\ref{similar} it suffices to consider $U$.
If $U$ has the structure given in~\eqref{med:eq02} then $U$ has $k$ groups 
of rows of the form 
$\boldsymbol{\rho}_i=[\,\boldsymbol{0}\,\cdots\,\boldsymbol{\pi}_i\,\cdots\,
  \boldsymbol{0}\,]$ for  $i=1,\ldots, k$. Hence $U$ has rank at least $k$. 
The other rows are of the form
   $[\,\alpha_1\boldsymbol{\pi}_1\,\cdots\,\alpha_k\boldsymbol{\pi}_k\,] = 
  \sum_{i=1}^k\alpha_i\boldsymbol{\rho}_i$, for some nonnegative constants
$\alpha_1,\ldots, \alpha_k$.
 Hence all rows of $U$ are linearly dependent on 
the vectors $\boldsymbol{\rho}_1,\ldots, \boldsymbol{\rho}_k$.  
Thus $U$ has rank exactly $k$, proving that $k=r$.

Conversely, if $M$ is a $r$-SI matrix then $M$ is an SI matrix. 
Theorem~\ref{med:thm01} states that $M\cong U$, where $U$ is given by~\eqref{med:eq02}. 
The argument above then implies that $k=r$.
\end{proof}

We are mostly interested in Markov chains with no ephemeral states
(that is, with no zero columns in their transition matrix),
where the following result will be useful.

\begin{corollary}\label{med:cor03}
Let $M$ be a nonnegative square matrix.
The following are equivalent.
\begin{itemize}
\item[\emph{(i)}]
$M$ is an SI matrix with no zero columns,
\item[\emph{(ii)}] 
$M\cong U$, where $U$ is the direct sum of 1-SI matrices,  
\item[\emph{(iii)}]
$M$ is an SI matrix which is reversible with respect to some positive
distribution: that is, $D^{-1}MD = M^T$ for some diagonal matrix $D$ with
all diagonal entries positive.
\end{itemize}
\end{corollary}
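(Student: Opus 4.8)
The plan is to establish the three equivalences in a cycle, exploiting the structural results already proved.

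First I would show (i)$\Rightarrow$(ii). By Theorem~\ref{med:thm01} with $t=0$ (no zero columns), $M\cong U$ where $U$ has the block form~\eqref{med:eq02} with an empty last block row and column. But with $t=0$ the final diagonal block is $0\times 0$, so it vanishes and $U = \boldsymbol{1}\boldsymbol{\pi}_1 \oplus \cdots \oplus \boldsymbol{1}\boldsymbol{\pi}_k$, a direct sum of $1$-SI matrices by Corollary~\ref{med:cor01}. This is essentially immediate.

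Next, (ii)$\Rightarrow$(iii). Suppose $M\cong_A U$ with $U = \boldsymbol{1}\boldsymbol{\pi}_1\oplus\cdots\oplus\boldsymbol{1}\boldsymbol{\pi}_k$. Each block $\boldsymbol{1}\boldsymbol{\pi}_i$ is stochastic and idempotent, and it is reversible with respect to $\boldsymbol{\pi}_i$ (check $(\boldsymbol{1}\boldsymbol{\pi}_i)D_i = D_i(\boldsymbol{1}\boldsymbol{\pi}_i)^T$ where $D_i=\operatorname{diag}(\boldsymbol{\pi}_i)$, since both sides equal the matrix with $(x,y)$ entry $\pi_i(x)\pi_i(y)$). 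Taking $D$ to be the block-diagonal matrix $D_1\oplus\cdots\oplus D_k$, which has all positive diagonal entries since each $\boldsymbol{\pi}_i$ is positive, we get $D^{-1}UD = U^T$. Conjugating back by $A$ (noting $A^T=A^{-1}$) and using Remark~\ref{similar} to see that $U$ stochastic and idempotent forces $M$ stochastic and idempotent, we obtain that $M$ is SI and $(A^TDA)^{-1}M(A^TDA) = M^T$, with $A^TDA$ diagonal-up-to-permutation — one must be slightly careful here, as $A^TDA$ need not be diagonal. The cleaner route is to note that permutation conjugation of a diagonal matrix is again diagonal (it just permutes the diagonal entries), so if $D$ is the diagonal matrix corresponding to $U$, then $A D A^T$ is the diagonal matrix witnessing reversibility of $M$. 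I expect this bookkeeping with the permutation to be the main (if modest) obstacle.

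Finally, (iii)$\Rightarrow$(i). Assume $M$ is SI and $D^{-1}MD = M^T$ for positive diagonal $D$. If $M$ had a zero column, say column $y$, then $M^T$ has a zero row $y$, hence $D^{-1}MD$ has a zero row $y$, hence $M$ has a zero row $y$ (as $D$ is invertible diagonal). But row $y$ of a stochastic matrix sums to $1$, a contradiction. Hence $M$ has no zero columns, giving (i). Stringing the implications (i)$\Rightarrow$(ii)$\Rightarrow$(iii)$\Rightarrow$(i) completes the proof. The only genuinely delicate point is verifying that permutation similarity preserves diagonality of the reversibility-witnessing matrix, which follows since a permutation matrix $A$ satisfies $A\operatorname{diag}(\boldsymbol{v})A^T = \operatorname{diag}(A\boldsymbol{v})$.
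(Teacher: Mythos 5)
Your proof is correct and follows essentially the same route as the paper: (i)$\Leftrightarrow$(ii) via Theorem~\ref{med:thm01} with $t=0$, (ii)$\Rightarrow$(iii) by taking $D$ block-diagonal with blocks $\operatorname{diag}(\boldsymbol{\pi}_i)$ (with the permutation conjugation handled exactly as you describe), and (iii)$\Rightarrow$(i) by the zero-column/zero-row contradiction against stochasticity. The only blemish is a left/right slip in your verification of block reversibility: the identity whose two sides both have $(x,y)$ entry $\pi_i(x)\pi_i(y)$ is $D_i(\boldsymbol{1}\boldsymbol{\pi}_i)=(\boldsymbol{1}\boldsymbol{\pi}_i)^T D_i$, not $(\boldsymbol{1}\boldsymbol{\pi}_i)D_i=D_i(\boldsymbol{1}\boldsymbol{\pi}_i)^T$; this is harmless because replacing $D$ by $D^{-1}$ converts one form of the reversibility condition into the other (indeed the paper's own proof establishes $DMD^{-1}=M^T$ while its statement asks for $D^{-1}MD=M^T$).
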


\begin{proof}
That (i) and (ii) are equivalent follows from Theorem~\ref{med:thm01} 
by setting $t=0$.  Next, suppose that $M$ is an SI matrix which 
satisfies $D^{-1}MD = M^T$ for some diagonal matrix $D$ with all diagonal entries 
positive.
This identity implies that if $M$ has a zero column then $M$ also has a zero row, 
contradicting
the fact that $M$ is stochastic.  Hence (iii) implies (i).

Finally, we will prove that (ii) implies (iii).  Note that it suffices
to assume that $M$ is the direct sum of 1-SI matrices, by Remark~\ref{similar}.
Hence we have  $M = U_1\oplus \cdots \oplus U_k$ for some $k\geq 1$, where
$U_i=\boldsymbol{1}\boldsymbol{\pi}_i$ for some positive vector 
$\boldsymbol{\pi}_i$ for $i=1,\ldots, k$. Define the positive vector
$D_i=\operatorname{diag}(\boldsymbol{\pi}_i)$ for $i=1,\ldots, k$, and 
let 
$D= D_1 \oplus \cdots \oplus D_k$. 
Then 
all diagonal entries of $D$ are positive and
\[ DMD^{-1}=\bigoplus_{i=1}^k\, D_iU_iD_i^{-1}
=\bigoplus_{i=1}^k\, (D_i\boldsymbol{1})(\boldsymbol{\pi}_iD_i^{-1})
  =\bigoplus_{i=1}^k\boldsymbol{\pi}_i^T\boldsymbol{1}^T=M^T,
\]
proving that $M$ is reversible with respect to $D$.  
Hence (ii) implies (iii), completing the proof.
\end{proof}

We can now establish the following characterisation of heat-bath Markov chains.

\begin{theorem}
Let $\mathcal{M}$ be a Markov chain on the finite state space $\Omega$,
which is reversible with respect to the probability distribution $\pi:\Omega\to (0,1]$.
Then $\mathcal{M}$ is a heat-bath chain (in the sense of Definition~\ref{d:heat-bath})
if and only if the transition matrix $P$ of $\mathcal{M}$ is a nonnegative linear
combination of nonnegative SI matrices with no zero columns.
\label{characterisation}
\end{theorem}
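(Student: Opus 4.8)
The plan is to prove both directions of the equivalence, using the structural results for SI matrices established above (Theorem~\ref{med:thm01} and especially Corollary~\ref{med:cor03}) to pin down exactly which matrices arise as the $P_a$ in a heat-bath chain.

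First I would establish the forward direction. Suppose $\mathcal{M}$ is a heat-bath chain, so $P=\sum_{a\in\mathcal{L}}\rho(a)P_a$ with $P_a$ defined by~\eqref{Pa}. From the proof of Lemma~\ref{heat-bath} we already know each $P_a$ is stochastic and idempotent, i.e.\ an SI matrix. It remains to check each $P_a$ has no zero columns: since $\pi$ is positive everywhere and condition~(II) says $\{\Omega_{x,a}:x\in\Omega\}$ partitions $\Omega$, every $y\in\Omega$ lies in some block $\Omega_{x,a}$, and then $P_a(x,y)=\pi(y)/\pi(\Omega_{x,a})>0$, so column $y$ is nonzero. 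Hence $P$ is a nonnegative linear combination (with coefficients $\rho(a)$ summing to one, but that extra normalisation is not needed) of nonnegative SI matrices with no zero columns.

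The main work is the converse. Suppose $P=\sum_{i=1}^N c_i M_i$ where $c_i\ge 0$ and each $M_i$ is a nonnegative SI matrix with no zero columns, and $P$ is reversible with respect to $\pi$. By Corollary~\ref{med:cor03}, each $M_i$ is reversible with respect to \emph{some} positive distribution and is permutation equivalent to a direct sum of blocks $\boldsymbol{1}\boldsymbol{\pi}^{(i)}_j$. The key point I would argue is that, because $P$ is reversible with respect to the \emph{specific} distribution $\pi$, one can take each $M_i$ to be reversible with respect to that same $\pi$; this forces the row-probability vectors appearing in the blocks of $M_i$ to be $\pi$ restricted and renormalised to the corresponding block. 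Concretely, write $M_i\cong_{A} \bigoplus_j \boldsymbol{1}\boldsymbol{\pi}^{(i)}_j$; the off-diagonal support of $P$ is contained in the union over $i$ of the block-structures of the $M_i$, and reversibility of $P$ with respect to $\pi$ together with the fact that each block of $M_i$ has all rows equal lets me identify $\boldsymbol{\pi}^{(i)}_j$ with $\pi|_{\Omega^{(i)}_j}/\pi(\Omega^{(i)}_j)$. Then I would set $\mathcal{L}=\{1,\dots,N\}$, let $\Omega_{x,i}$ be the block of the partition induced by $M_i$ that contains $x$ (this partition satisfies (I) trivially and (II) by construction), and normalise the coefficients by putting $\rho(i)=c_i/\sum_j c_j$ (noting $\sum_i c_i$ must equal one since $P$ and each $M_i$ are stochastic). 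With these choices $P_i$ as defined by~\eqref{Pa} coincides with $M_i$, so~\eqref{general} holds and $\mathcal{M}$ is a heat-bath chain by Definition~\ref{d:heat-bath}.

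The hard part will be the identification step: showing that reversibility of the \emph{sum} $P$ with respect to $\pi$ propagates to each summand $M_i$, so that the partition-blocks of $M_i$ carry the correctly renormalised restriction of $\pi$. One clean way to handle this is to pass to the symmetrised matrices $D^{-1}M_iD$ with $D=\operatorname{diag}(\sqrt{\pi(x)})$: since $D^{-1}PD$ is symmetric and equals $\sum_i c_i(D^{-1}M_iD)$ with each $D^{-1}M_iD$ positive semidefinite (by the argument in the proof of Lemma~\ref{heat-bath}, each $M_i$ being SI), one can try to read off the block structure of the individual $M_i$ from that of $P$; alternatively, argue directly that within each irreducible block of $M_i$, the common row vector must be proportional to $\pi$ restricted to that block because $M_i$ restricted there is $\boldsymbol{1}\boldsymbol{\pi}^{(i)}_j$ and $M_i^2=M_i$ forces $\boldsymbol{\pi}^{(i)}_j\boldsymbol{1}=1$, while reversibility of $P$ on states within that block (where $P$ agrees, up to the scalar $c_i$ plus contributions of other $M_{i'}$ that respect the same block or cross it) pins down the proportionality constant. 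I would also remark that, as in the remarks following Lemma~\ref{transfer}, irreducibility of $\mathcal{M}$ is not assumed anywhere, so the characterisation holds for possibly-reducible reversible chains.
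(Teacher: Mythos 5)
Your forward direction is correct and coincides with the paper's argument, and your set-up for the converse also follows the paper's route: invoke Corollary~\ref{med:cor03} to write each summand $M_i$ as (a permutation of) a direct sum of $1$-SI blocks, let $\Omega_{x,i}$ be the block containing $x$, and then try to show that each block carries the distribution $\pi$ restricted to that block and renormalised. You have correctly isolated the step that carries all the weight --- what you call the ``identification step'' --- but you do not actually prove it; you only sketch two strategies. That is a genuine gap, and it cannot be closed as proposed, because the underlying claim is false: reversibility of the sum $P$ with respect to $\pi$ does \emph{not} propagate to the individual summands. The $\pi$-asymmetry of $M_i$ on a pair $\{x,y\}$ can be cancelled by an opposite asymmetry of another summand $M_j$ whose block also contains $x$ and $y$. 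For a concrete instance, take $\Omega=\{1,2,3\}$ and let $M_1$ have blocks $\{1,2\},\{3\}$ with common row $(\tfrac58,\tfrac38)$ on $\{1,2\}$; let $M_2$ have the single block $\{1,2,3\}$ with common row $(\tfrac14,\tfrac12,\tfrac14)$; and let $M_3$ have blocks $\{1\},\{2,3\}$ with common row $(\tfrac38,\tfrac58)$ on $\{2,3\}$. Each is a nonnegative SI matrix with no zero columns, and
\[
P=\tfrac13(M_1+M_2+M_3)=\left[\begin{array}{ccc}\nicefrac58&\nicefrac7{24}&\nicefrac1{12}\\[0.3ex]\nicefrac7{24}&\nicefrac5{12}&\nicefrac7{24}\\[0.3ex]\nicefrac1{12}&\nicefrac7{24}&\nicefrac58\end{array}\right]
\]
is symmetric, hence reversible with respect to the uniform distribution, and irreducible. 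If $P$ were a heat-bath chain, irreducibility would force the defining distribution to be uniform, so $P$ would be a convex combination $\sum_{i=1}^5\rho_iH_i$ of the five heat-bath matrices associated with the five partitions of $\{1,2,3\}$. Matching the three off-diagonal entries gives $\rho_3=\tfrac16-\tfrac23\rho_5$ (forcing $\rho_5\le\tfrac14$, where $H_5=\tfrac13J$ and $H_3$ is the partition $\{1,3\},\{2\}$) and $\rho_2=\rho_4=\tfrac7{12}-\tfrac23\rho_5$, whence $\rho_1=\rho_5-\tfrac13\ge0$ forces $\rho_5\ge\tfrac13$ --- a contradiction. So $P$ satisfies the right-hand side of the equivalence but is not a heat-bath chain.

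You should also be aware that the published proof stumbles at exactly the same point: it asserts without justification that ``$B_{x,a}$ is reversible with respect to the distribution $\pi$ restricted to $\Omega_{x,a}$'', which is precisely the identification step you flagged, and which the example above refutes. So neither your second strategy (pinning down the block distributions from reversibility of $P$) nor your first (reading off the blocks of the $M_i$ from the symmetrised $D^{-1}PD$, which need not even be a sum of \emph{symmetric} PSD matrices, since $D^{-1}M_iD$ is symmetric only when $M_i$ is itself reversible with respect to $\pi$) can succeed without further hypotheses. The converse does hold, by essentially your argument, if one additionally assumes that each summand $M_i$ is reversible with respect to the given $\pi$ (for then Corollary~\ref{med:cor03} plus uniqueness of the stationary distribution of each irreducible block does identify the block rows with $\pi$ renormalised); any honest write-up should either add such a hypothesis or present a counterexample to the unrestricted statement.
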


\begin{proof}
Suppose that $\mathcal{M}$ is a heat-bath matrix.  Then $M$ satisfies (\ref{general})
for some finite set $\mathcal{L}$ and some probability distribution $\rho$ on $\mathcal{L}$.
Recalling (\ref{Pa}) we see that each $P_a$ is stochastic, 
and the proof of Lemma~\ref{heat-bath} shows that each $P_a$ is idempotent.
Finally, note that $P_a$ has no zero columns since $x\in\Omega_{x,a}$ for all $x\in\Omega$,
which implies that $P(x,x)>0$ for all $x\in\Omega$. Hence $P$ has the required form.

For the converse, suppose that $P = \sum_{a\in\mathcal{L}} \rho(a) P_a$  for some
finite set $\mathcal{L}$, where each $P_a$ is a nonnegative
SI matrix with no zero column and $\rho(a)\geq 0$ for all $a\in\mathcal{L}$.
Since $P$ is also stochastic, it follows that $\rho$ is a probability distribution on
$\mathcal{L}$.   Fix $a\in \mathcal{L}$. 
Corollary~\ref{med:cor03} shows that
$P_a$ is permutation-equivalent to a direct sum of 1-SI matrices, which we 
refer to as blocks.  For each $x\in\Omega$, let $\Omega_{x,a}$
be the set of all states which correspond to a row in the block of $P_a$ containing $x$.
(This set is well-defined as it does not depend on the ordering of the blocks.)
It follows that $x\in\Omega_{x,a}$ for all $x\in\Omega$ and $a\in\mathcal{L}$, and
that the sets $\{ \Omega_{x,a} : x\in\Omega\}$ form a partition of $\Omega$, for each
$a\in\mathcal{L}$.  Hence conditions (I), (II) of Section~\ref{s:result} hold.

Now let $B_{x,a}$ be the block of $P_a$ which corresponds to the set $\Omega_{x,a}$.
Then $B_{x,a}$ is a 1-SI matrix, so it equals $\boldsymbol{1}\boldsymbol{\pi}_{x,a}$
for some positive vector $\boldsymbol{\pi}_{x,a}$ which sums to 1.  However, 
$B_{x,a}$ is reversible with respect to the distribution $\pi$ restricted to $\Omega_{x,a}$.
It follows that for all $y,z\in\Omega_{x,a}$ we have
\[ P_a(z,y) = P_a(x,y) = \frac{\pi(y)}{\pi(\Omega_{x,a})},\]
since $B_{x,a}$ has exactly one stationary distribution.
Using the block structure of $P_a$, it follows that $P_a$ satisfies (\ref{Pa}) for all
$a\in\mathcal{L}$.  Therefore $\mathcal{M}$ is a heat-bath chain in the sense of
Definition~\ref{d:heat-bath}, completing the proof.
\end{proof}

\subsection{Chains with finite convergence}

We now investigate a possible generalisation of the notion of an
SI matrix.

Suppose that $M$ is an $n\times n$ stochastic matrix and, for every 
nonnegative vector $\boldsymbol{\alpha}=(\alpha_1,\alpha_2,\ldots,\alpha_n)$
such that $\boldsymbol{\alpha}\boldsymbol{1}=1$, there exists 
a positive integer
$m_{\boldsymbol{\alpha}}$ such that
\begin{equation} \lim_{t\to\infty}\boldsymbol{\alpha} M^t=
  \boldsymbol{\alpha} M^{m_{\boldsymbol{\alpha}}}.
\label{finiteconv}
\end{equation}
(We remark that this condition implies that $M$ is aperiodic.)
Such matrices have been considered before, and correspond to
\emph{chains with finite convergence}~\cite{BG,lindqvist,subelman}.

Write
$\boldsymbol{\alpha} M^{m_{\boldsymbol{\alpha}}}=\boldsymbol{\pi}_{\boldsymbol{\alpha}}$. 
Then $\boldsymbol{\pi}_{\boldsymbol{\alpha}}M=\boldsymbol{\pi}_{\boldsymbol{\alpha}}$, since
\[ \boldsymbol{\pi}_{\boldsymbol{\alpha}}M=\lim_{t\to\infty}\boldsymbol{\alpha} M^{t+1}
   =\lim_{t\to\infty}\boldsymbol{\alpha} M^t=\boldsymbol{\pi}_{\boldsymbol{\alpha}}.\]
Hence the Markov chain corresponding to $M$ converges in a finite number of steps
from any initial distribution.  This generalises the blocks $B_{x,a}$ of the SI matrices $P_a$,
which converge to their stationary distribution after one step.

Let $\boldsymbol{e}_j$ be the $j$th unit (row) vector for $j=1,\ldots, n$.  
(Note, this breaks with our convention of using greek letters for rows and roman letters
for columns.) For ease of
notation, write 
$m_j$ and $\boldsymbol{\pi}_j$ rather than 
$m_{\boldsymbol{e}_j},\,\boldsymbol{\pi}_{\boldsymbol{e}_j}$, for $j=1,\ldots, n$. 
Then
\[ \boldsymbol{\pi}_{\boldsymbol{\alpha}}=\lim_{t\to\infty}\boldsymbol{\alpha} M^t
   = \sum_{j=1}^n\alpha_j\lim_{t\to\infty}\boldsymbol{e}_j M^t
   =\sum_{j=1}^n\alpha_j\boldsymbol{\pi}_j.\]
Let $m=\max_i m_i$, so that
\[ \boldsymbol{e}_jM^m=\boldsymbol{e}_jM^{m_j}M^{m-m_j}=
  \boldsymbol{\pi}_jM^{m-m_j}=\boldsymbol{\pi}_j \quad \text{ for }\,\,
  j=1,\ldots, n.\]
Hence
\[ \boldsymbol{\alpha} M^m = \sum_{j=1}^n\alpha_j\boldsymbol{e}_jM^m = 
\sum_{j=1}^n\alpha_j\boldsymbol{\pi}_j= \boldsymbol{\pi}_{\boldsymbol{\alpha}},\] 
so we may take $m_{\boldsymbol{\alpha}}=m$ for all $\boldsymbol{\alpha}$. Then, for any nonnegative
integer $\delta$,
\[ \boldsymbol{e}_j(M^{m+\delta}-M^m)=\boldsymbol{\pi}_jM^\delta-\boldsymbol{\pi}_j=0\]
for $j=1,\ldots, n$, which implies that $M^{m+\delta}=M^m$.
Taking $\delta=1$ shows that the eigenvalues $\lambda$ of $M$ satisfy $\lambda^m(\lambda-1)=0$. 
Hence the only eigenvalues of $M$ are 0 and 1. 
Taking $\delta=m$ gives $M^{2m}=M^m$, so $M^m$ is idempotent.

If $M$ is also reversible then more is true, as we prove below.
(We remark that the matrices $C_{j}$ which appear in the statement of
Lemma~\ref{l:delta} are more general
than those which arise in the proof of Theorem~\ref{characterisation}.)

\begin{lemma}
Let $M$ be a stochastic matrix with $t$ zero columns.
Suppose that there exists
a positive integer $m$ such that 
\[ M^{m+\delta} = M^m \qquad \text{ for all } \delta\in\mathbb{N}.\]
Then $M\cong U$ for some matrix $U$ with the block structure
\begin{equation}
U =\left[\begin{array}{cccccc}
U_1 & 0& 0&\cdots& 0 & 0\\
0&U_2 & 0&\cdots&0 & 0\\
0&0&U_{3}&\cdots& 0 & 0\\
\vdots&\vdots&\vdots&\ddots&\vdots & \vdots \\
0&0&0&\cdots&U_{k} & 0\\
C_1 & C_2 & C_3 & \cdots & C_k & 0\end{array}\right]
\label{required}
\end{equation}
where the $U_i$ are 1-SI matrices, the $C_i$ are nonnegative,
for $i=1,\ldots, k$, and the last diagonal block has size $t\times t$. 
In particular, if $M$ is reversible with respect to some
positive distribution then $M$ is idempotent and, necessarily, $t=0$.
\label{l:delta}
\end{lemma}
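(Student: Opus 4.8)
The plan is to follow the template of the proof of Theorem~\ref{med:thm01}: strip off the ephemeral states, then decompose what remains into irreducible blocks using the canonical form of a nonnegative matrix together with the identity $M^{m+1}=M^{m}$. As already observed before the statement, this identity forces every eigenvalue of $M$ into $\{0,1\}$ and makes $M^{m}$ idempotent; reversibility will be used afterwards to pin the irreducible blocks down as $1$-SI matrices.

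First I would delete the $t$ zero columns of $M$ together with the corresponding rows, obtaining a stochastic matrix $M'$ with no zero column (repeating this if new zero columns appear). Since a zero column corresponds to a state that is never entered, no walk between two surviving states visits a deleted state, so the submatrix of $M^{s}$ on the surviving states is exactly $(M')^{s}$ and hence $(M')^{m+\delta}=(M')^{m}$ for all $\delta$. Next, put $M'\cong U'$ into the block upper-triangular form~(\ref{med:eq03}) with each diagonal block $A_{ii}$ irreducible or a $1\times1$ zero. Taking $s$-th powers shows $A_{ii}^{m+\delta}=A_{ii}^{m}$, so every $A_{ii}$ has spectrum in $\{0,1\}$; the leading block $A_{11}$ is nonzero (else $M'$ would have a zero column), hence irreducible, and being substochastic of spectral radius $1$ it is in fact stochastic by~\cite[Corollary~6.2.28]{HJ}, exactly as in the proof of Lemma~\ref{med:lem03}. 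This forces $A_{1j}=0$ for $j>1$ and splits $A_{11}$ off as a direct summand; by induction $M'\cong U_{1}\oplus\cdots\oplus U_{k}$ with each $U_{i}$ irreducible, stochastic and aperiodic. Re-attaching the deleted rows at the bottom---their columns form the all-zero block of width $t$, since no state transitions into an ephemeral one---yields the shape~(\ref{required}), with $C_{i}$ recording the transitions from the ephemeral states into the $i$-th block and each row of $[\,C_{1}\ \cdots\ C_{k}\ 0\,]$ summing to one because $U$ is stochastic.

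The step I expect to be the main obstacle is upgrading each $U_{i}$ from ``eventually rank one'' (which $A_{ii}^{m}$ is) to genuinely $1$-SI, that is, $U_{i}=\boldsymbol{1}\boldsymbol{\pi}_{i}$. The minimal polynomial of a diagonalisable matrix with eigenvalues in $\{0,1\}$ divides $x(x-1)$, so it would be enough to know that each $U_{i}$ is diagonalisable; but $M^{m+1}=M^{m}$ alone does not guarantee this---a general irreducible stochastic matrix satisfying it need not equal $\boldsymbol{1}\boldsymbol{\pi}$---so something more than the bare power identity has to be brought in here.

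That extra ingredient is reversibility. If $M$ is reversible with respect to a positive distribution $\pi$, then conjugating by $D^{1/2}$ with $D=\operatorname{diag}(\pi)$ makes $M$ symmetric, so $M$, and hence each diagonal block $U_{i}$, is diagonalisable; together with the spectral restriction this makes every $U_{i}$ idempotent, i.e.\ $1$-SI. Moreover a zero column of a reversible stochastic matrix would force a zero row (as in the proof of Corollary~\ref{med:cor03}), contradicting stochasticity; hence $t=0$ and no $C_{i}$-part occurs, so $M\cong U_{1}\oplus\cdots\oplus U_{k}$ is a direct sum of idempotents and is therefore idempotent. (One can also bypass the block analysis for this last clause: diagonalisable with spectrum in $\{0,1\}$ already gives $M(M-I)=0$.) This establishes the ``in particular'' assertion.
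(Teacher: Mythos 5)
Your block decomposition follows the paper's own proof closely: Frobenius normal form, each irreducible diagonal block shown to be stochastic via Lemma~\ref{med:lem03} and \cite[Corollary 6.2.28]{HJ}, hence splitting off as a direct summand. The real content of your submission is the obstacle you flag, and you are right to flag it: it is exactly where the paper's argument fails. The paper deduces from $U_i^m(U_i-I)=0$ that $U_i$ has the simple eigenvalue $1$ and all other eigenvalues $0$, and then asserts that ``it follows that $U_i$ has rank~$1$''; but rank is not determined by the spectrum when the matrix is not diagonalisable, which is precisely your objection. Indeed the first assertion of the lemma is false. Let $\boldsymbol{\pi}=(\nfrac13,\nfrac13,\nfrac13)$ and
\[
M=\left[\begin{array}{ccc}
\nicefrac12&\nicefrac12&0\\[0.3ex]
\nicefrac16&\nicefrac16&\nicefrac23\\[0.3ex]
\nicefrac13&\nicefrac13&\nicefrac13
\end{array}\right]
=\boldsymbol{1}\boldsymbol{\pi}+N,
\qquad\text{where}\quad
N\boldsymbol{1}=\boldsymbol{0},\quad \boldsymbol{\pi}N=\boldsymbol{0},\quad N^2=0 .
\]
Then $M$ is stochastic, irreducible and has no zero columns, and $M^{2+\delta}=M^{2}=\boldsymbol{1}\boldsymbol{\pi}$ for every $\delta\in\mathbb{N}$; yet $M$ has rank $2$ and is not idempotent, whereas every matrix of the form~(\ref{required}) is idempotent (as in Theorem~\ref{med:thm01}). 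So the unrestricted block-structure claim cannot be proved because it is false, and your refusal to manufacture an argument for it is the correct response rather than a gap on your part.

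Your treatment of the reversible case is sound and delivers the only part of the lemma that the paper subsequently uses: conjugating by $\operatorname{diag}(\pi)^{1/2}$ makes $M$ symmetric, hence diagonalisable; the power identity confines the spectrum to $\{0,1\}$; therefore $M(M-I)=0$. As you note, this bypasses the block analysis entirely, and in particular it avoids the faulty rank-one step on which the paper's own derivation of the ``in particular'' clause relies. One minor caution on your preliminary reduction: deleting the $t$ zero columns together with their rows can create new zero columns (states entered only from ephemeral states), so the zero block in~(\ref{required}) may need width larger than $t$; you anticipate this by iterating the deletion, but your later assertion that the reattached zero block has width exactly $t$ should be weakened to match. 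That is cosmetic next to the main point above.
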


\begin{proof}
By reordering the elements of $\Omega$, we obtain a matrix $U$ 
such that $M\cong U$, where $U$ has the  
block structure
\[
U=\left[\begin{array}{cccccc}
U_1&A_{12}&A_{13}&\cdots&A_{1k} & 0\\
0&U_2&A_{23}&\cdots&A_{2k} & 0 \\
0&0&U_3&\cdots&A_{3k} & 0 \\
\vdots&\vdots&\vdots&\ddots&\vdots & \vdots\\
0&0&0&\cdots&U_k & 0\\
C_1 & C_2 & C_3 & \cdots & C_k & 0
\end{array}\right]
\]
such that $A_{ij}$ is a nonnegative matrix for $1\leq i < j\leq k$,
while $U_{i}$ is square,
irreducible and substochastic and $C_i$ is nonnegative, for $i=1,\ldots, k$.
%

Now each $U_{i}$ is substochastic and irreducible, so $U_i^m$ is substochastic,
irreducible and idempotent. Therefore $U_i^m$ is a 1-SI matrix, by 
Lemma~\ref{med:lem03}.  Hence $U_i$ is stochastic: if the $q$'th row sum
of a substochastic matrix is strictly less than 1, then the 
same is true for any power of that matrix.
It follows that $A_{ij}=0$ for $1\leq i< j\leq k$.

Furthermore, each $U_i$ satisfies $U_i^m(U_i-I)=0$, and hence
has eigenvalue 1 (with multiplicity 1)
with all other eigenvalues zero.  It follows that $U_i$ has rank 1,
and since $U_i$ is stochastic, this implies that 
$U_i = \boldsymbol{1}\boldsymbol{\pi}_i$ for some positive vector 
$\boldsymbol{\pi}_i$, for $i=1,\ldots, k$. 
By Corollary~\ref{med:cor01} it follows that $U_i$ is a 1-SI
matrix for $i=1,\ldots, k$, and (\ref{required}) holds.

Finally, if $M$ is reversible with respect to some positive distribution
then $M$ has no zero columns (that is, $t=0$).
Hence $M$ is permutation equivalent to the direct sum of 1-SI matrices,
and by
Corollary~\ref{med:cor04} it follows that $M$ is idempotent,
as claimed.
\end{proof}

Hence for reversible chains, there is no generalisation to 
Theorem~\ref{characterisation} that can be obtained by replacing
``idempotent'' by some notion of finite convergence, as in (\ref{finiteconv}).

Let $m$ be the smallest integer such that $M^m=M$. 
Then $M$ has complex eigenvalues if $m\geq 4$,
which implies that any matrix satisfying this condition is not reversible.
The case $m=3$ corresponds to periodic Markov chains, which certainly have negative
eigenvalues, and $m=2$ is precisely the idempotence condition.


\begin{thebibliography}{99}

\bibitem{AMMV}
D.~Achlioptas, M.~Molloy, C.~Moore and F.~Van Bussell,
Rapid mixing for lattice colourings with fewer colours,
\emph{Journal of Statistical Mechanics: Theory and Experiment}
(2005),  P10012.

\bibitem{BG}
I.~Brosh and Y.~Gerchak,
Markov chains with finite convergence time,
\emph{Stochastic Processes and their Applications} {\bf 7} (1978),
247--253.

\bibitem{CDGJM}
M.~Cryan, M.~Dyer, L.A.~Goldberg, M.~Jerrum and R.~Martin,
Rapidly mixing Markov chains for sampling contingency tables with
a constant number of rows,
\emph{SIAM Journal on Computing } {\bf 36} (2006), 247--278.

\bibitem{DSC}
P.~Diaconis and L.~Saloff-Coste,
Comparison theorems for reversible Markov chains,
\emph{Annals of Applied Probability} {\bf 3} (1993), 696--730.

\bibitem{DS91}
P.~Diaconis and D.~Stroock,
Geometric bounds for eigenvalues of Markov chains,
\emph{Annals of Applied Probability} {\bf 1} (1991),
36--61.

\bibitem{DGJM}
M.~Dyer, L.A.~Goldberg, M.~Jerrum and R.~Martin,
Markov chain comparison,
\emph{Probability Surveys} {\bf 3} (2006), 89--111.

\bibitem{DG-contingency}
M.E.~Dyer and C.~Greenhill,
Polynomial-time counting and sampling of two-rowed contingency tables,
\emph{Theoretical Computer Science} {\bf 246} (2000), 265--278.

\bibitem{ES}
S.F.~Edwards and A.D.~Sokal,
Generalization of the {F}ortuin-{K}asteleyn-{S}wendsen-{W}ang 
     		representation and {M}onte {C}arlo algorithm,
\emph{Phys. Rev. D} {\bf 38} (1988), 2009--2012.

\bibitem{GJ}
V.R.~Gore and M.R.~Jerrum,
The Swendsen Wang process does not always mix rapidly,
\emph{Journal of Statistical Physics} {\bf 97} (1999), 67--86.

\bibitem{hayes}
T.~Hayes,
A simple condition implying rapid mixing of single-site dynamics
on spin systems,
in \emph{Proceedings of the 47th Annual IEEE Symposium on
Foundations of Computer Science (FOCS 2006)}, IEEE, Los Alimitos,
2006, pp.~39--46.

\bibitem{HJ}
R.A.~Horn and C.R.~Johnson, \emph{Matrix Analysis},
Cambridge University Press, Cambridge, 1985.

\bibitem{jerrum-book}
M.~Jerrum,
\emph{Counting, Sampling and Integrating: algorithms and 
complexity}, 
Lectures in Mathematics -- ETH Z{\" u}rich, Birkh{\" a}user, Basel, 2003. 

\bibitem{lindqvist}
B.~Lindqvist,
Ergodic Markov chains with finite convergence time,
\emph{Stochastic Processes and their Applications} {\bf 11} (1981),
91--99.

\bibitem{LS}
E.~{Lubetzky} and A.~{Sly}.
\newblock {Critical Ising on the square lattice mixes in polynomial time}.
\newblock {\em Comm. Math. Phys.}, 2012.
\newblock doi:10.1007/s00220-012-1460-9.  

\bibitem{M99}
F.~Martinelli.
\newblock Lectures on {G}lauber dynamics for discrete spin models.
\newblock In {\em Lectures on probability theory and statistics
  ({S}aint-{F}lour, 1997)}, volume 1717 of {\em Lecture Notes in Math.}, pages
  93--191. Springer, Berlin, 1999.

\bibitem{MO}
F.~Martinelli and E.~Olivieri,
Approach to equilibrium of Glauber dynamics in the one phase region
I. The attractive case,
\emph{Communications in Mathematical Physics} {\bf 161} (1994),
pp.~447--486.


\bibitem{sinclair}
A.~Sinclair, 
Improved bounds for mixing rates of Markov chains
and multicommodity flow, 
\emph{Combinatorics, Probability and Computing}
{\bf 1} (1992), 351--370.

\bibitem{SV}
D.~{\v S}tefanovi{\v c} and E.~Vigoda,
Fast convergence of Markov chain Monte Carlo algorithms for
phylogenetic reconstruction with homogeneous data on closely
related species,
\emph{SIAM Journal on Discrete Mathematics} {\bf 25} (2011), 1194--1211.

\bibitem{subelman}
E.J.~Subelman,
On the class of Markov chains with finite convergence time,
\emph{Stochastic Processes and their Applications} {\bf 4} (1976),
253--259.

\bibitem{SW}
Swendsen, R.~H. and Wang, J.-S.,
Nonuniversal critical dynamics in {M}onte {C}arlo simulations,
\emph{Phys. Rev. Lett.} {\bf 58} (1987),
86--88.

\bibitem{U}
M.~Ullrich, 
Rapid mixing of Swendsen-Wang dynamics in two dimensions, 
\emph{to appear in Dissertationes Math.} (2014).


\end{thebibliography}
\end{document}